\newtheorem{theorem}{Theorem}[section]
\newtheorem{lemma}[theorem]{Lemma} 
\newtheorem{corollary}[theorem]{Corollary}
\newtheorem{proposition}[theorem]{Proposition}
\newcommand{\cl}{{\mathcal L}}
\newcommand{\cL}{{\mathcal L}}
\begin{document}

\title{Lassoing and corraling rooted phylogenetic trees
}


\author{Katharina T.\,Huber}


\address{School of Computing Sciences, University of East Anglia,
        Norwich, NR4 7TJ, UK}
\email{katharina.huber@cmp.uea.ac.uk}

\author{Andrei-Alin Popescu }
 \address{School of Computing Sciences, University of East Anglia,
        Norwich, NR4 7TJ, UK}
        \email{Andrei-Alin.Popescu@uea.ac.uk}

\date{\today\phantom{Received: date / Accepted: date}}

\maketitle

\begin{abstract}
The construction of a dendogram on a set of individuals 
is a key component of a genomewide association study. 
However even with modern 
sequencing technologies the distances on the 
individuals required for the construction of such a structure
may not always be reliable 
making it tempting to exclude them from an analysis. This, in turn, 
results in an input set for dendogram construction that consists 
of only partial distance information which raises the
following fundamental question. For what subset of its leaf set can we reconstruct uniquely the dendogram from the distances that it induces on that subset.
By formalizing a dendogram in terms of an
edge-weighted, rooted phylogenetic tree on a pre-given finite set 
$X$ with $|X|\geq 3$ whose edge-weighting is equidistant
and a set of partial
distances on $X$ in terms of a set $\cL$ of 2-subsets of $X$,
we investigate this problem in terms of when such a tree is lassoed, that is,
uniquely determined by the elements in $\cL$. For this we consider four 
different formalizations of the idea of ``uniquely determining'' 
giving rise to four distinct types of lassos. We present 
characterizations for all of them in terms of 
the child-edge graphs of the interior vertices of such a tree.
Our characterizations imply in particular that in case the tree
in question is binary then all four types of lasso must coincide.

\end{abstract}

\noindent{\bf Keywords} {rooted phylogenetic tree, lasso, corral,
partial distance

\section{Introduction}

Years of selective breeding have resulted in large numbers of different
varieties for, for example, oilseed rape and rice and also numerous 
animal breeds including dogs 
and chicken. Genomewide association studies constitute a powerful 
tool to try and
link the observed phenotypic variability between the varieties (we
will collectively refer to a variety and a breed as a variety)
with variations in the genomes of the variety. A key 
component of such a study is phenetic clustering whereby one aims to 
construct a dendogram for a set of individuals from within a variety
of interest indicating levels of similarity between them. Using 
some sort of distance measure this similarity can be based
on e.\,g.\,morphological traits such as grain type or Single Nucleotide 
Polymorphism (SNP) markers obtained through next generation  
sequencing technology (see e.\,g.\, \cite{andrea2012,GWASchicken,mlm} 
for examples of such studies).

From a formal point of view, a dendogram can be thought of as 
a pair $(T,\omega)$ consisting of (i) a rooted  
tree $T$ with leaf set a given non-empty finite set $X$  (e.\,g.\, 
individuals), no degree two vertices 
but possibly a distinguished vertex $\rho_T$ of $T$
called the root of $T$, and all other non-leaf vertices
of $T$ of degree at least three (we will refer to such a
tree simply as an {\em $X$-tree}), and (ii) an edge-weighting
$\omega: E(T)\to \mathbb R_{\geq 0}$ for $T$ that is
equidistant which means that the induced distance 
$D_{(T,\omega)}(\rho_T,x)$ from 
$\rho_T$ to every leaf $x\in X$ of $T$ is the same (as with all relevant
concepts, we refer the
reader to the next section for a precise definition). With and without
the equidistance requirement such pairs $(T,\omega)$ have generated a 
lot of interest in the literature
 and so it is not surprising that numerous deep results for them are 
known  provided the distance
information from which to construct $T$ and $\omega$  is complete
in the sense that for all elements $x$ and $y$ in $X$ the distance
between $x$ and $y$ is given (see e.\,g.\,\cite{basic-phy-comb,SS03}).

However even for data generated with modern 
sequencing technologies the required distance measures 
need not always be reliable (or may simply be missing)
resulting in only partial 
distance information for dendogram reconstruction.
 From the perspective of the aforementioned formalization
of such a structure, the problem thus becomes (a) how to 
construct an {\em equidistant $X$-tree} (i.\,e.\,an $X$-tree 
with equidistant edge weighting) 
from distance information on only a subset of pairs of its
leaves and, (b) if such a tree can be constructed from such a subset of its
leaves, when is it lassoed , that is, uniquely determined by that set.

Although approaches for tackling the first problem 
exist in the form of, for example,
an approach introduced in \cite{D-S84} not much is known about
the second. A notable exception is a study 
in \cite{DHS12} carried out for the unrooted analogue 
of an equidistant $X$-tree.
Viewing partial distance information on a set $X$ as a set of
{\em cords}, that is, subsets of $X$ of size two, 
the authors considered the following four
natural interpretations of the above uniqueness
problem.  Namely, given a set $\cL\subseteq {X\choose 2}$ 
and an edge-weighted unrooted phylogenetic tree $T$ on $X$ when 
\begin{enumerate}
\item[(i)] 
is the edge-weighting of $T$ uniquely determined by $\cL$?,
\item[(ii)]  is the shape i.\,e.\,the topology of $T$ uniquely 
determined by $\cL$?,
\item[(iii)] are both the edge-weighting and the topology of $T$ 
uniquely determined
by $\cL$?, and
\item[(iv)]  when is the topology of $T$ uniquely determined 
by $\cL$ up to $T$ being obtained from another $X$-tree 
by collapsing edges?.
\end{enumerate}

Formalized as $\cL$ being an 
edge-weight/topological/strong/weak lasso for an unrooted
phylogenetic tree with leaf set $X$, 
the authors
of \cite{DHS12} showed that all four concepts are distinct.  
Also, they presented results that allowed them to not only
investigate the above types of lasso 
from a recursive point of view but also characterize
under what circumstances a specifically constructed set of cords
is a topological lasso
(see Section~\ref{sec:examples} for more on this).
However a characterization for the general case eluded them.

Replacing the concept of an edge-weight lasso by that of an 
equidistant lasso 
to reflect the fact that for the edge-weighted 
$X$-trees of interest
here the induced distance from the root to any leaf of such a tree 
is the same, we show that for $X$-trees the situation changes. 
More precisely,
we present for an $X$-tree $T$ characterizations for
when a set $\cL\subseteq {X\choose 2}$ is a weak lasso
for $T$ (Theorem~\ref{theo:characterization-corraling}),
an equidistant lasso for $T$ (Theorem~\ref{theo:edge-weight}),
and for when it is a topological lasso for $T$ 
(Theorem~\ref{theo:characterization-topology})
in terms of the child edge graph $G_T(\cL,v)$ that can be 
canonically associated to every non-leaf vertex $v$ of $T$ 
via its child edges. Our characterizations can be thought 
of as a spectrum on the connectedness of that graph with the
extreme situations being an equidistant lasso and a topological lasso.
They imply that every topological lasso and every non-empty
weak lasso must
be an edge-weight lasso ((Corollaries~\ref{cor:coincide} and 
\ref{cor:weak-implies-equidistant}) and that in case 
$T$ is binary the notions of an equidistant lasso and a topological
lasso (and thus a weak lasso) coincide. Consequently, 
every edge-weight/topological
lasso is also a strong lasso in that case (Corollary~\ref{cor:coincide}). 
We also investigate two
special types of sets of cords originally introduced in \cite{DHS12} in the
light of our findings above. This investigation
shows in particular
that it is possible for the concepts of an equidistant and a topological lasso
to coincide without the $X$-tree they are referring to being binary.

The outline of the paper is as follows.
In the next section, we introduce relevant terminology. In 
Section~\ref{sec:first-characterization}, we present first
characterizations for when a set of cords is a 
topological/equidistant/weak lasso for an $X$-tree 
(Theorem~\ref{theo:first-characterization}). 
In Section~\ref{sec:properties-of-G(cL,v)}, we introduce 
the child-edge graph associated to a non-leaf vertex of an $X$-tree and
present first properties of it concerning corraling 
sets of cords. In Section~\ref{sec:corral}, we establish 
Theorem~\ref{theo:characterization-corraling}.
In Section~\ref{sec:edge-weight}, we show
Theorem~\ref{theo:edge-weight} and in 
Section~\ref{sec:topological},
we prove Theorem~\ref{theo:characterization-topology}. 
In Section~\ref{sec:examples},
we present two general ways for constructing 
for an $X$-tree $T$ two different sets of
cords of ${X\choose 2}$ and discuss their properties in the context of lassoing and corraling $T$. We conclude with
Section~\ref{sec:conclusion} where we also 
present some open problems.

\section{Preliminaries}\label{sec:basic-terminology}

Assume from now on that $X$ is always a finite set with at least 3 
elements. For a cord  of $X$ with elements $a$ and $b$ we write
$ab$ rather than $\{a,b\}$. 

Suppose for the following that  $T$ is an $X$-tree. Then 
we call a vertex of $T$ that is not a leaf of $T$ an {\em interior vertex}
of $T$, an edge $e$ that is incident with a leaf of $T$ a {\em pendant
edge} of $T$, and an edge of $T$ that is incident with two 
interior vertices of $T$ an {\em interior edge} of $T$. 
We denote the set of all interior
vertices of $T$ by $V^o(T)$ and the root of $T$ by
$\rho_T$. Moreover we call $T$ binary if every interior vertex
of $T$ has degree three but the root which has degree two. Let $T'$ be a further $X$-tree. Then we say that 
$T$ and $T'$ are  {\em equivalent} if there exists a
 bijection $\phi:V(T)\to V(T')$
that extends to a graph isomorphism between $T$ and $T'$ that is
the identity on $X$ and maps the root $\rho_T$ of $T$ to the
root $\rho_{T'}$ of $T'$. Also, we say that
$T'$ {\em refines} $T$ if, up to equivalence,
$T$ can be obtained from $T'$ by collapsing edges of $T'$
(see e.\,g.\, \cite{SS03}). In that case, we will also call $T'$
a {\em refinement} of $T$. Note that every $X$-tree is its own refinement.

Let $\omega$ denote an {\em edge-weighting for $T$}, that is, 
a map $\omega:E(T)\to \mathbb R_{\geq 0}$. Then we denote by 
$(T,\omega)$ the edge-weighted $X$-tree whose
underlying $X$-tree and edge-weighting are $T$ and $\omega$, respectively,
and by $D_{(T,\omega)}$ the distance 
induced by $\omega$ on $V(T)$. We call $\omega$ 
{\em proper} if $\omega(e)>0$ holds for all interior edges
$e$ of $T$ and we call it {\em equidistant} if 
\begin{enumerate}
\item[(E1)] $D_{(T,\omega)}(x,\rho_T)=  D_{(T,\omega)}(y,\rho_T)$, for all
$x,y\in X$, and
\item[(E2)] $D_{(T,\omega)}(x,u)\geq  D_{(T,\omega)}(x,v)$, for all
$x\in X$ and any $u,v\in V(T)$ such that $u$ is encountered
before $v$  on the path from $\rho_T$ to $x$.
\end{enumerate} 
Note that Property (E2) implies that, for all interior vertices $v$
of $T$ and all leaves $x,y\in X$ of $T$ for which $v$ lies on the
path from $x$ to $y$, we have 
$D_{(T,\omega)}(x,v)=D_{(T,\omega)}(y,v)$. Also note that
our definition of an equidistant edge-weighting
is slightly different from the one given
in \cite{SS03} in so far that $\omega$ is a map into 
$\mathbb R_{\geq 0}$ and not into $\mathbb R$, as in \cite{SS03}.

Suppose $\cL\subseteq {X\choose 2}$ is a set of cords. 
If $T'$  is a further $X$-tree and
$\omega$ and $\omega'$ are proper edge-weightings for $T$ and
$T'$, respectively, we say 
that $(T,\omega)$ and $(T',\omega')$ are {\em $\cL$-isometric} if 
$D_{(T,\omega)}(x,y)=D_{(T',\omega')}(x,y)$ holds for all cords 
$xy\in \cL$. Canonically extending the corresponding
concepts introduced in \cite{DHS12} 
for unrooted phylogenetic trees on $X$ (and further studied in \cite{tripCovDist}) to $X$-trees, we say that $\cL$ is
\begin{enumerate}
\item[(i)] an {\em equidistant lasso for $T$} if, for all 
equidistant, proper edge-weightings $\omega$ and $\omega'$ of
$T$, we have that $\omega=\omega'$ holds whenever
$(T,\omega)$ and $(T,\omega')$ are $\cL$-isometric
\item[(ii)] a {\em topological lasso for $T$} if, for every $X$-tree $T'$ and
any equidistant, proper edge-weightings $\omega$ of $T$ and $\omega'$ of
$T'$, respectively,  we have that $T$ and $T'$ are equivalent
whenever $(T,\omega)$ and  $(T',\omega')$ are $\cL$-isometric.
\item[(iii)] is a {\em strong lasso for $T$} if $\cL$  is simultaneously
an equidistant and a topological lasso for $T$.
\item[(iv)] a {\em weak lasso for $T$} if, for every $X$-tree $T'$ and
any equidistant, proper edge-weightings $\omega$ of $T$ and $\omega'$ of
$T'$, respectively  we have that $T$ is refined by $T'$ 
whenever $(T,\omega)$ and  $(T',\omega')$ are $\cL$-isometric.
\end{enumerate}
Also, we say that a set $\cL\subseteq {X\choose 2}$ of cords is an {\em 
equidistant/topological/weak/strong lasso on $X$} if there exists an
$X$-tree $T$ such that $\cL$ is an equidistant/topological/weak/strong lasso
for $T$. For the convenience of the reader, we illustrate the above 
types of lassos in Fig.~\ref{examplesFig} for $X=\{a,b,c,d,e\}$.
\begin{figure}[h]
\begin{center}
\includegraphics[scale=0.3]{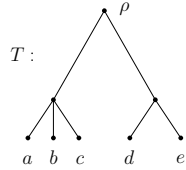}
\caption{\label{examplesFig}
For $X=\{a,b,c,d,e\}$, the set $\cL=\{ab,cd,de\}$ is an equidistant lasso
for the depicted $X$-tree $T$, the set $\cL=\{ab,ac,bc,bd,de\}$ is a topological lasso for $T$
and also a strong lasso for $T$, and the set $\cL=\{ab,bc,cd,de\}$ is a 
weak lasso for $T$.}
\end{center}
\end{figure}

Note that we will also say that a set $\cL$ of cords of $X$ {\em corrals} an
$X$-tree $T$ if $\cL$ is a weak lasso for $T$. Also note that a topological lasso for an $X$-tree $T$ is in particular a weak lasso for $T$, and that the notions of
a topological lasso for $T$ and a weak lasso for $T$ coincide if $T$ 
is binary. Finally note that for $\cL$ to be a topological/equidistant lasso 
we must have that $\cL\not=\emptyset$.
However $\cL\not=\emptyset$ need not hold for $\cL$ to corral
an $X$-tree as every subset of ${X\choose 2}$ including the empty-set 
corrals the {\em star-tree on $X$}, that
is the tree with a unique interior vertex and leaf set $X$. 




\section{A first characterization of a topological/weak/equidistant lasso}
\label{sec:first-characterization}
In this section we present a first characterization for when a set
of cords of $X$ is a topological/weak/equidistant lasso for an
$X$-tree. To establish this characterization, 
we require further definitions and notations. 

Suppose for the following that $T$ is an $X$-tree
and that $v\in V^o(T)$. Then
we call an edge $e$ of $T$ incident
with $v$ that is not crossed by the path from the root
$\rho_T$ of $T$ to $v$ a {\em child edge} of $v$.  If $e$
is incident with $v$ but lies on the path from 
$\rho_T$ to $v$ then we call it a {\em parent edge} of $v$.
In the former case, we call the vertex incident with that
edge but distinct from $v$ a {\em child} of $v$ and in the
latter a {\em parent} of $v$. We call a vertex $w$ of $T$ distinct 
from $v$ a {\em descendant} of $v$ if there exists a path from $v$ to $w$
(possibly of length one) that crosses a child of $v$ and denote the 
set of leaves 
of $T$ that are also descendants of $v$ by $L_T(v)$. If $v$ is a leaf
of $T$, then we put $L_T(v):=\{v\}$. Also if there
is no ambiguity as to which $X$-tree $T$ we are referring then we will
write $L(v)$ rather than $L_T(v)$. 

Suppose $T$ is an $X$-tree. Then, for all $x\in X$, we denote
the edge of $T$ incident with $x$ by $e_x$ and the parent of
$x$ by $v_x$. Moreover, we call a non-empty subset $L\subsetneq X$ of 
leaves of $T$ that all have the same 
parent $v\in V^o(T)$ a {\em pseudo-cherry} of $T$ if $L=L(v)$. In that case,
we also call $v$ the {\em parent} of that pseudo-cherry. 
If $\{x_1,\ldots, x_k\}$, $k\geq 2$, is a pseudo-cherry of
some $X$-tree $T$ then we will sometimes write $x_1,\ldots, x_k$
rather than $\{x_1,\ldots, x_k\}$. Note that every 
$X$-tree on three or more leaves that is {\em non-degenerate}, that is, not the star-tree on $X$ must
contain at least one pseudo-cherry. Also note
that in case $|L|=2$
the definition of an pseudo-cherry reduces to that of  
a cherry in the usual sense (see e.g. \cite{SS03}).
In the special
case that $|X|=3$, say $X=\{a,b,c\}$, and that $T$ has a cherry, 
$a,b$ say,
we call $T$ a {\em triplet} and denote $T$ by $ab|c$ (or,
equivalently, $c|ab$). 

Now assume again that $T$ is an arbitrary $X$-tree. For any 
non-empty subset $Y\subseteq X$, we denote by $T|_Y$  
the subgraph of $T$ whose leaf set is $Y$ (where we {\em suppress} resulting
interior vertices $v$ of $T|_Y$ with a single child, that is, if 
the parent of $v$ is $w$ and $u$ is the unique child of $v$ 
then we delete $v$ plus its incident edges from $T|_Y$ and add the
edge $\{w,u\}$ to $E(T|_Y)$). Note that $T|_Y$ is clearly a $Y$-tree.
We say for three pairwise
distinct elements $a,b,c\in X$ that $T$ {\em displays} the triplet
$ab|c$ if $T|_{\{a,b,c\}}$ is equivalent with
$ab|c$. We denote by $\mathcal R(T)$ the set of triplets displayed by $T$.
As is well-known, any $X$-tree $T$ can display at most ${|X|\choose 3}$
triplets with equality holding if and only if $T$ is binary.
Furthermore, any $X$-tree $T$ is uniquely determined
by the set $\mathcal R(T)$ in the sense that 
if $T'$ is a further $X$-tree and $\mathcal R(T)=\mathcal R(T')$
holds then $T$ and $T'$ must be equivalent (see 
e.\,g.\,\cite[Chapter 9]{basic-phy-comb} and \cite[Section 6.4]{SS03}). 

Observe that if $T$ is an $X$-tree, $\omega$ is an equidistant, 
proper edge-weighting for $T$, and $a,a',b\in X$ are
pairwise distinct elements then either 
all three pairwise distances induced on $Z:=\{a,a',b\}$ 
must coincide or two of the distances induced by it must coincide
and the third one must be strictly less than that distance. 
Moreover, 
$D_{(T,\omega)}(a,a')<D_{(T,\omega)}(a,b)=D_{(T,\omega)}(a',b)$
holds if and only if $aa'|b\in \mathcal R(T)$ and we have 
$D_{(T,\omega)}(a,a')=D_{(T,\omega)}(a,b)=D_{(T,\omega)}(a',b)$
if and only if the $Z$-tree $T|_Z$ is the star-tree on $Z$.

The next result is fundamental for the paper.


\begin{lemma}\label{lem:dist-transfer}
Suppose that $T$ and $T'$ are two $X$-trees with 
equidistant, proper
edge-weightings $\omega$ and $\omega'$, respectively, and let 
$a,a',b\in X$ denote three pairwise distinct elements 
such that $D_{(T,\omega)}(a,a')=D_{(T',\omega')}(a,a')$ and
$D_{(T,\omega)}(a,b)=D_{(T',\omega')}(a,b)$. Then the
following hold:
\begin{enumerate}
\item[(i)] 
If 
$D_{(T,\omega)}(a,a')<D_{(T,\omega)}(a,b)=D_{(T,\omega)}(a',b)$
then
\begin{eqnarray}\label{help1}
D_{(T',\omega')}(a,a')<D_{(T',\omega')}(a,b)=D_{(T',\omega')}(a',b).
\end{eqnarray}
In particular, $D_{(T',\omega')}(a',b)=D_{(T,\omega)}(a',b)$.
\item[(ii)] $aa'|b\in \mathcal R(T)$ if and only if 
$aa'|b\in \mathcal R(T')$. 
\item[(iii)] If $D_{(T,\omega)}(a',b)=D_{(T',\omega')}(a',b)$
then $T|_Z$ is the star-tree on $Z:=\{a,a',b\}$
if and only if $T'|_Z$ is the  star-tree on $Z$.
\end{enumerate}
\end{lemma}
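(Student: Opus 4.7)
The plan is to reduce the three assertions to the trichotomy established just before the lemma, which says that, for an equidistant proper edge-weighting on an $X$-tree and any three pairwise distinct leaves, the three induced pairwise distances either all coincide (precisely when the induced three-leaf restriction is the star-tree) or else two coincide and strictly dominate the third (precisely when the restriction is the corresponding triplet). Because the hypotheses of the lemma force $D_{(T,\omega)}(a,a')=D_{(T',\omega')}(a,a')$ and $D_{(T,\omega)}(a,b)=D_{(T',\omega')}(a,b)$, the shape of $T'|_{\{a,a',b\}}$ is then pinned down by a short case analysis.

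For part~(i), I would first transport the strict inequality across the two trees: from $D_{(T,\omega)}(a,a')<D_{(T,\omega)}(a,b)$ together with the lemma's hypotheses one gets $D_{(T',\omega')}(a,a')<D_{(T',\omega')}(a,b)$ at once. The four candidate shapes for $T'|_{\{a,a',b\}}$ are then the star-tree on $\{a,a',b\}$ and the three triplets $aa'|b$, $ab|a'$, and $a'b|a$. The star-tree forces $D_{(T',\omega')}(a,a')=D_{(T',\omega')}(a,b)$ and so is excluded; the triplet $ab|a'$ forces $D_{(T',\omega')}(a,b)<D_{(T',\omega')}(a,a')$ and is likewise excluded; and the triplet $a'b|a$ forces $D_{(T',\omega')}(a,a')=D_{(T',\omega')}(a,b)$, again excluded. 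The only compatible option is $aa'|b$, which yields (\ref{help1}). The ``in particular'' clause then follows by chaining $D_{(T',\omega')}(a',b)=D_{(T',\omega')}(a,b)=D_{(T,\omega)}(a,b)=D_{(T,\omega)}(a',b)$, using (\ref{help1}), the lemma's second hypothesis, and the hypothesis of (i), respectively.

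For part~(ii), the forward direction is immediate from (i), since by the trichotomy $aa'|b\in\mathcal R(T)$ is equivalent to the strict inequality hypothesis of (i), and the conclusion of (i) delivers the analogous strict inequality in $T'$, which in turn means $aa'|b\in\mathcal R(T')$. The reverse direction is symmetric: the hypotheses of the lemma are invariant under swapping $(T,\omega)$ and $(T',\omega')$, so (i) can be reapplied with their roles exchanged. For part~(iii), the added equality $D_{(T,\omega)}(a',b)=D_{(T',\omega')}(a',b)$ means that all three pairwise distances on $Z=\{a,a',b\}$ agree in $T$ and in $T'$, and by the star-tree clause of the trichotomy both ``$T|_Z$ is the star-tree on $Z$'' and ``$T'|_Z$ is the star-tree on $Z$'' are equivalent to the single numerical statement that these common three values all coincide, so the required biconditional is immediate. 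The only real care needed anywhere is the bookkeeping in the four-way case analysis of part~(i); once that is done, parts~(ii) and~(iii) are one-line consequences of the trichotomy and the symmetry of the hypotheses.
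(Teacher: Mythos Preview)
Your proof is correct and follows essentially the same approach as the paper's: both transport the strict inequality $D_{(T,\omega)}(a,a')<D_{(T,\omega)}(a,b)$ to $T'$ via the two hypotheses and then invoke the ultrametric trichotomy stated just before the lemma to force $aa'|b$ in $T'$, with (ii) and (iii) following immediately. The only cosmetic difference is that you spell out the four-way case analysis on the shape of $T'|_{\{a,a',b\}}$ explicitly, whereas the paper rules out the two offending equalities $D_{(T',\omega')}(a,a')=D_{(T',\omega')}(a,b)$ and $D_{(T',\omega')}(a,a')=D_{(T',\omega')}(a',b)$ directly and then appeals to the trichotomy; the underlying logic is the same.
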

\begin{proof}
(i): 
Note that
$D_{(T,\omega)}(a,a')=D_{(T',\omega')}(a,a')$ combined with
$D_{(T,\omega)}(a,b)=D_{(T',\omega')}(a,b)$  implies that
we cannot have  $D_{(T',\omega')}(a,a')=D_{(T',\omega')}(a,b)$
as this would imply that $D_{(T,\omega)}(a,a')=D_{(T,\omega)}(a,b)$
which is impossible. But then our assumptions imply that
$D_{(T',\omega')}(a,a')=D_{(T',\omega')}(a',b)$ cannot hold either.
Inequality (\ref{help1})  now follows from
the observation preceding the statement of the lemma. 
In particular, this implies
$D_{(T',\omega')}(a',b)=D_{(T',\omega')}(a,b)=D_{(T,\omega)}(a,b)
=D_{(T,\omega)}(a',b)$.

(ii) $\&$ (iii): This is an immediate consequence of (i) and 
the observation preceding the statement of the lemma.
\end{proof}

To be able to state the next result we require  a further definition.
Suppose $\cL\subseteq {X\choose 2}$  and
$T$ is an $X$-tree. Let $x,y\in X$ 
be two distinct leaves of $T$ that are contained in 
the same pseudo-cherry of $T$. Then
we put 
$$
\cL_1(x,y):=\{ab\in \cL\, :\, x\not\in \{a,b\}\}
\cup \{ay\, : \, ax\in \cL\}.
$$
Note that $\cL=\emptyset$ if and only if $\cL_1(x,y)=\emptyset$.

\begin{theorem}\label{theo:first-characterization}
Suppose that $\cL\subseteq {X\choose 2}$ is a set of cords and
that $T$ is an $X$-tree. Let $x,y\in X$ denote
two distinct leaves of $T$ that are
contained in the same pseudo-cherry of $T$. 
Then the following hold.
\begin{enumerate}
\item[(i)] $\cL$ is a topological lasso for $T$ if and only if 
$\cL_1(x,y)\cup \{xy\}$ is a topological lasso for $T$ and $xy\in\cL$.
\item[(ii)] $\cL$ is an equidistant lasso for $T$ if and only if 
$\cL_1(x,y)\cup \{xy\}$ is an equidistant lasso for $T$ and $xy\in\cL$.
\item[(ii)] If $\cl\not=\emptyset$ 
then $\cL$ is a weak lasso for $T$ if and only if 
$\cL_1(x,y)\cup \{xy\}$ is a weak lasso for $T$ and $xy\in\cL$.
\end{enumerate}
\end{theorem}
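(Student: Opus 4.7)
Since the three parts of Theorem~\ref{theo:first-characterization} share the same underlying mechanism and differ only in which flavor of uniqueness is being exploited, the plan is to prove them in parallel. Write $v$ for the common parent of $x$ and $y$ in $T$. Equidistance of $\omega$ combined with $v_x = v_y = v$ forces $\omega(e_x) = \omega(e_y)$, and hence for every $a \in X\setminus\{x,y\}$ one has $D_{(T,\omega)}(a,x) = D_{(T,\omega)}(a,y)$: inside $T$, the distance information carried by a cord $ax$ is indistinguishable from that carried by $ay$. This observation is the motor of the proof.

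For the ``if'' direction I would assume that $\cL_1(x,y) \cup \{xy\}$ is a lasso of the appropriate flavor for $T$ and that $xy \in \cL$. Given a proper equidistant weighted $X$-tree $(T', \omega')$ that is $\cL$-isometric to $(T, \omega)$, the goal is to upgrade this to $(\cL_1(x,y) \cup \{xy\})$-isometry, after which the hypothesis on $\cL_1(x,y) \cup \{xy\}$ supplies the desired conclusion (equivalence of $T$ and $T'$ for (i); equality $\omega = \omega'$ for (ii); $T$ refined by $T'$ for (iii)). Cords in $\cL_1(x,y) \cup \{xy\}$ that already lie in $\cL$ are immediate; the nontrivial cords are those of the form $ay$ arising from some $ax \in \cL$. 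For such a cord I would invoke Lemma~\ref{lem:dist-transfer}: from $xy \in \cL$ and $ax \in \cL$ we obtain $D_{(T,\omega)}(x,y) = D_{(T',\omega')}(x,y)$ and $D_{(T,\omega)}(a,x) = D_{(T',\omega')}(a,x)$. When $a$ lies outside the pseudo-cherry containing $x$ and $y$, we have $xy|a \in \mathcal R(T)$ and part~(i) of the lemma gives $D_{(T',\omega')}(a,y) = D_{(T,\omega)}(a,y)$ directly; when $a$ lies in the pseudo-cherry, all three distances in $T|_{\{x,y,a\}}$ coincide, and parts~(ii) and~(iii) of the lemma combined with the equalities already established force $T'|_{\{x,y,a\}}$ to be the star-tree on $\{x,y,a\}$, yielding the same conclusion.

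For the ``only if'' direction there are two conjuncts to establish. To see that $xy$ must belong to $\cL$, I would argue by contradiction: if $xy \notin \cL$, construct an alternative proper equidistant weighted $X$-tree $(T^*, \omega^*)$ by inserting a new interior vertex $w$ between $v$ and the two leaves $x,y$, making $x$ and $y$ the only children of $w$, and rescaling so that $\omega^*(e_x) = \omega^*(e_y)$ is strictly smaller than $\omega(e_x)$ while $\omega^*(e_w)$ takes up the slack. No cord of $\cL$ meets both $x$ and $y$, so every cord length is preserved; yet $T^* \not\equiv T$, and $T$ is not refined by $T^*$, contradicting each of the three lasso hypotheses on $\cL$. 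To then show that $\cL_1(x,y) \cup \{xy\}$ is itself a lasso of the appropriate flavor, take $(T'', \omega'')$ that is $(\cL_1(x,y) \cup \{xy\})$-isometric to $(T, \omega)$; the cords in $\cL \setminus (\cL_1(x,y) \cup \{xy\})$ are of the form $ax$ arising from $ay \in \cL_1(x,y)$, and the Lemma~\ref{lem:dist-transfer} argument from the ``if'' direction, applied with the roles of $x$ and $y$ swapped, yields $D_{(T'',\omega'')}(a,x) = D_{(T,\omega)}(a,x)$. Hence $(T,\omega)$ and $(T'',\omega'')$ are $\cL$-isometric and the hypothesis that $\cL$ is a lasso delivers the conclusion.

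The main obstacle I anticipate lies in the pseudo-cherry case of Lemma~\ref{lem:dist-transfer}: part~(iii) requires one to already know that the ``third'' distance $D_{(T',\omega')}(a,y)$ equals $D_{(T,\omega)}(a,y)$, so using it to \emph{prove} that equality threatens circularity. Breaking the circularity needs a careful case analysis ruling out the stray triplet $ya|x$ in $T'$, likely by playing off further cords of $\cL$ meeting the pseudo-cherry against Lemma~\ref{lem:dist-transfer}(ii). A second delicate point is the construction of $(T^*, \omega^*)$: the new interior vertex and the rescaled weights must remain proper and equidistant while genuinely violating each of the three uniqueness properties, and one must check that no cord of $\cL$ is inadvertently disturbed by the reweighting.
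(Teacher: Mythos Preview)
Your strategy coincides with the paper's: transfer isometry between $\cL$ and $\cL_1^+:=\cL_1(x,y)\cup\{xy\}$ via Lemma~\ref{lem:dist-transfer}, then invoke the assumed lasso property. The paper is in fact terser than you are---it never argues $xy\in\cL$ separately, and it cites Lemma~\ref{lem:dist-transfer} without isolating the star case.

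Two genuine problems remain, and neither is repaired by the paper. First, your $(T^*,\omega^*)$ does not do what you claim: inserting $w$ below $v$ with children $x,y$ produces a \emph{refinement} of $T$ (collapse $\{v,w\}$ to recover $T$), so it cannot violate the weak-lasso property in~(iii); and since an equidistant lasso compares two weightings on the \emph{same} tree, $T^*\neq T$ renders the construction inapplicable to~(ii). Indeed $xy\in\cL$ can fail in those two cases: with pseudo-cherry $\{x,y,b\}$ and a further leaf $c$ adjacent to the root, $\cL=\{xb,yb,bc\}$ is both an equidistant and a weak lasso for $T$ although $xy\notin\cL$. Second, the circularity you flag in the star case is fatal, not merely delicate. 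With the same $T$ and $\cL=\{xy,xb,yb,xc\}$, one checks directly that $\cL$ is a topological lasso for $T$; but $\cL_1^+=\{xy,yb,yc\}$ is not, because the proper refinement of $T$ in which $x,b$ form a cherry below $v$ is $\cL_1^+$-isometric to $(T,\omega)$ yet inequivalent to $T$. Only the cord $xy$ in $\cL_1^+$ meets $x$, so the stray triplet $xb|y$ cannot be excluded by ``further cords''. The paper's one-line appeal to Lemma~\ref{lem:dist-transfer} is silent on this case too; both the argument and the statement go through only when $\{x,y\}$ is an honest cherry, which happens to be the sole situation needed downstream (Lemma~\ref{lem:corraling-unique-cherry}).
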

\begin{proof}
(i): Put $\cL_1^+:=\cL_1(x,y)\cup \{xy\}$ and  
assume first that $\cL$ is a topological lasso for $T$. Then 
$\cL\not=\emptyset$.
Let $T'$ denote an $X$-tree and $\omega$ and $\omega'$  equidistant, proper
edge-weightings for $T$ and $T'$, respectively, so that 
$(T,\omega)$ and $(T',\omega')$ are $\cL_1^+$-isometric. To see that
$T$ and $T'$ are equivalent it clearly suffices to show that 
$(T,\omega)$ and $(T',\omega')$ are also $\cL$-isometric,
that is, 
\begin{eqnarray}\label{eqn:topological-if-and-only-if-1}
D_{(T,\omega)}(a,b)=D_{(T',\omega')}(a,b)
\end{eqnarray}
holds for all $ab\in \cL$.

Suppose $ab\in\cL$. If $x\not\in \{a,b\}$ then $ab\in\cL_1^+$
and so Equation (\ref{eqn:topological-if-and-only-if-1}) holds as 
$(T,\omega)$ and $(T',\omega')$ are $\cL_1^+$-isometric. So assume that
$x\in \{a,b\}$, say $x=a$.  If $y=b$ then $ab=xy\in\cL$ and so
Equation (\ref{eqn:topological-if-and-only-if-1}) holds by the 
same argument. 
If $y\not=b$ then $xb=ab\in \cL$ and so $yb\in\cL_1^+$.
Since $xy\in \cL_1^+$ also holds, we have 
$ D_{(T,\omega)}(x,b)=D_{(T',\omega')}(x,b)$ by Lemma~\ref{lem:dist-transfer}
and so Equation (\ref{eqn:topological-if-and-only-if-1}) follows
in this case, too.
 
Conversely, suppose that $\cL_1^+$ is a topological lasso for $T$
and that $xy\in \cL$. Then $\cL_1^+\not=\emptyset$.
Assume that
$T'$ is an $X$-tree and that $\omega$ and $\omega'$  are equidistant, proper
edge-weightings for $T$ and $T'$, respectively, so that 
$(T,\omega)$ and $(T',\omega')$ are $\cL$-isometric. To see that
$T$ and $T'$ are equivalent it clearly suffices to show that 
$(T,\omega)$ and $(T',\omega')$ are also $\cL_1^+$-isometric,
that is, that
\begin{eqnarray}\label{eqn:topological-if-and-only-if-2}
D_{(T,\omega)}(a,b)=D_{(T',\omega')}(a,b)
\end{eqnarray}
holds for all $ab\in \cL_1^+$.

Suppose $ab\in\cL_1^+$. If $x\in \{a,b\}$ then  $ab=xy\in \cL$ and so
Equation (\ref{eqn:topological-if-and-only-if-2}) holds as
$(T,\omega)$ and $(T',\omega')$ are $\cL$-isometric.
So assume that $x\not\in \{a,b\}$. Then $ab\in \cL_1(x,y)$.
If $y\not\in \{a,b\}$ holds too then $ab\in\cL$ and so 
Equation (\ref{eqn:topological-if-and-only-if-2}) holds by
the same argument. So assume that $y\in \{a,b\}$, say, $y=a$. 
Then $yb=ab\in \cL_1^+$ and so one of $yb\in\cL$
and $xb\in \cL$ must hold by the definition of $\cL_1^+$.
If the former holds then $ab=yb\in \cL$ and so 
 Equation (\ref{eqn:topological-if-and-only-if-2}) holds
by assumption on $(T,\omega)$ and $(T',\omega')$. If $xb\in\cL$ then 
$D_{(T,\omega)}(y,b)=D_{(T',\omega')}(y,b)$
follows by Lemma~\ref{lem:dist-transfer} since, by assumption, 
$xy\in \cL$. But then Equation (\ref{eqn:topological-if-and-only-if-2})
holds in this case, too.

(ii) $\&$ (iii): This follows using similar arguments as in 
the proof of (i).
\end{proof}

\section{The child-edge graph 
}\label{sec:properties-of-G(cL,v)}

In this section we first introduce the child-edge 
graph $G_T(\cL,v)$ associated to an interior vertex $v$ of 
an $X$-tree $T$ and a non-empty set $\cL\subseteq {X\choose 2}$ of cords
and then study some of its properties with regards to corraling an
$X$-tree. We start with a definition.

Suppose $T$ is an $X$-tree, $v\in V^o(T)$, and 
$\cL\subseteq {X\choose 2}$ is a non-empty set of cords. Then we call
the graph $G_T(\cL,v)=(V_{T,v},E_{T,v})$ with vertex set $V_{T,v}$
the set of all child edges of $v$ and edge set $E_{T,v}$ the
set of all $\{e,e'\}\in {V_{T,v}\choose 2}$ for which there
exist leaves $a,b\in X$ such that
$e$ and $e'$ are edges on the path from $a$ to $b$ in $T$ 
and $ab\in \cL$ the {\em child-edge graph (of $v$ with respect 
to $T$ and $\cL$)}. Note that in case there is no danger of 
ambiguity with regards to the $X$-tree $T$ we are referring to, we
 will write $G(\cL,v)$ rather than $G_T(\cL,v)$ and $V_v$ and $E_v$
rather than $V_{T,v}$ and $E_{T,v}$.

For $T$ an $X$-tree, the next 
result provides a key insight into the structure of 
$G(\cL,v)$, $v\in V^o(T)$, whenever $\cL$ corrals $T$. To state it,
we denote by 
$V^o(T)^-$ the set of all interior vertices of $T$
that are not a parent of a pseudo-cherry of $T$.

\begin{lemma}\label{lem:corraling-implies}
Suppose that $T$ is a non-degenerate $X$-tree, that 
 $\cL\subseteq {X\choose 2}$ is a non-empty 
 set of cords that corrals $T$, and that $v\in V^o(T)$. Then 
 the following holds.
\begin{enumerate}
\item[(i)] If $v\in V^o(T)^-$ and $e\in V_v$ is a child edge
of $v$ that is not incident with a leaf of $T$ then
$\{e,e'\}\in E_v$, for all $e'\in V_v$ that are 
incident with a leaf of $T$. In particular, $G(\cL,v)$ 
is connected. 
\item[(ii)] If $v\in V^o(T)-V^o(T)^-$ then $G(\cL,v)$
is connected.
\end{enumerate}
\end{lemma}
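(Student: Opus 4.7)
My overall approach is by contradiction: assuming $\cL$ corrals $T$ while the claimed graph-theoretic property fails, I produce an $X$-tree $T'$ and an equidistant proper weighting $\omega'$ so that $(T,\omega)$ and $(T',\omega')$ are $\cL$-isometric yet $T'$ does not refine $T$, contradicting the hypothesis that $\cL$ is a weak lasso for $T$.

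For the first assertion of (i), assume $v\in V^o(T)^-$, $e=(v,w)$ is a non-leaf child-edge, $e'=e_y$ is a leaf child-edge, and $\{e,e'\}\notin E_v$. By the definition of $E_v$, this says $yz\notin \cL$ for every $z\in L(w)$. I build $T'$ from $T$ by detaching the leaf $y$ from $v$ and reattaching it as a new leaf child of $w$ (suppressing $v$ in $T'$ if $v$ had only two children in $T$), and obtain $\omega'$ from $\omega$ by preserving all edge weights outside the edges incident with $v$ and $y$ and setting the new edge between $w$ and $y$ to have weight equal to the common distance from $w$ to any leaf descendant of $w$. Moving $y$ alters the lowest common ancestor of $y$ and another leaf $u$ precisely when $u\in L(w)$, in which case it drops from $v$ down to $w$, leaving the lca unchanged otherwise; hence the distances induced by $(T,\omega)$ and $(T',\omega')$ coincide on every cord not of the form $\{y,z\}$ with $z\in L(w)$, and by hypothesis none of the differing cords lies in $\cL$, so $\cL$-isometry holds. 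To see that $T'$ does not refine $T$, I pick $z_1,z_2\in L(w)$ in distinct subtrees rooted at children of $w$ (possible since $w$ is an interior vertex of $T$): the triplet $z_1z_2|y$ belongs to $\mathcal R(T)$ because the lca of $z_1$ and $z_2$ in $T$ sits strictly below $v$, whereas in $T'$ the three leaves $y,z_1,z_2$ share $w$ as their common ancestor and so form a star, giving $z_1z_2|y\notin\mathcal R(T')$. Hence $\mathcal R(T)\not\subseteq\mathcal R(T')$, so $T'$ cannot refine $T$, the desired contradiction.

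The ``In particular'' connectedness claim of (i) is immediate from the first assertion whenever $v$ has at least one leaf child $y$: every non-leaf child-edge of $v$ is then adjacent in $G(\cL,v)$ to $e_y$, and every other leaf child-edge is in turn adjacent (again by the first assertion) to any non-leaf child-edge of $v$, which exists because $v\in V^o(T)^-$. In the residual subcase where every child of $v$ is a non-leaf, the same pattern applies via an auxiliary construction: given a partition $A\cup B$ of the non-leaf children witnessing disconnectedness, split $v$ at its parent into $v_A$ and $v_B$ as in the proof of (ii) below, or, if $v=\rho_T$, collapse the shorter side of the partition (or take a star $X$-tree when the two sides share a common height). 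This last subcase, where $v=\rho_T$ has only non-leaf children, is the main obstacle of the proof: neither the ``move a leaf'' nor the ``split at parent'' construction is directly available, and one must tailor the restructuring based on the relative heights of the subtrees rooted at the children of $v$, verifying in each sub-case that $\cL$-isometry is preserved.

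For part (ii), since $T$ is non-degenerate, $v\neq\rho_T$, so $v$ has a parent $p$ which itself has at least one child other than $v$. Suppose the children of $v$, all leaves by hypothesis, admit a partition $A\cup B$ with no cord of $\cL$ crossing between $A$ and $B$. I construct $T'$ by replacing $v$ with two interior vertices $v_A$ and $v_B$, both children of $p$, having respective child sets $A$ and $B$; whenever $|A|=1$ or $|B|=1$, the single element is placed directly as a leaf child of $p$ to avoid creating an illegal degree-two interior vertex. Assigning $v_A$ and $v_B$ the same distance to their leaf descendants as $v$ had in $T$ and leaving all other edge weights unchanged preserves distances within $A$, within $B$, and between $L(v)$ and $X\setminus L(v)$; the only affected distances are the cross-cords between $A$ and $B$, which are absent from $\cL$ by hypothesis, so $\cL$-isometry holds. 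Finally, choosing $a_1\in A$, $a_2\in B$, and $a_3\in L(p)\setminus L(v)$, the triplet $a_1a_2|a_3$ belongs to $\mathcal R(T)$ but collapses to a star at $p$ in $T'$, so $T'$ fails to refine $T$, again contradicting corraling.
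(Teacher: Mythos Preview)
Your approach---producing a non-refining $T'$ by regrafting a leaf or by splitting a vertex at its parent---is exactly the paper's, and your arguments for the first assertion of (i) and for (ii) are correct and parallel the paper's constructions closely (the paper moves two components out from under $v$ in (ii) rather than splitting into $v_A,v_B$, but the effect is the same).

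Where you go beyond the paper is in trying to justify the ``In particular, $G(\cL,v)$ is connected'' clause of (i) on its own. The paper's proof of the lemma in fact establishes only the first sentence of (i); connectedness for a vertex $v\in V^o(T)^-$ with \emph{no} leaf children is effectively deferred to Proposition~\ref{prop:corraling-impliesI}, where $G(\cL,v)_s$ is shown to be a clique. Your split-at-parent device does handle this subcase correctly when $v\neq\rho_T$.

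The genuine gap is your treatment of the residual subcase $v=\rho_T$ with only non-leaf children. The phrases ``collapse the shorter side of the partition'' and ``take a star $X$-tree when the two sides share a common height'' do not give working constructions: for example, the star tree on $X$ cannot be $\cL$-isometric to $(T,\omega)$ unless every cord in $\cL$ already has the same $T$-distance, which is not forced by the hypothesis. You yourself flag this case as unresolved. The fix is precisely the device the paper deploys in the proof of Proposition~\ref{prop:corraling-impliesI}: choose $a_0\in A$, $b_0\in B$, pick an equidistant proper $\omega$ with $\omega(\{v,a_0\})>\omega(\{v,b_0\})$, and regraft $a_0$ as a child of $b_0$ with the new edge weighted $\omega(\{v,a_0\})-\omega(\{v,b_0\})$ (suppressing $v$ if it drops to one child). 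All $\cL$-distances are preserved, and the triplet $z_1z_2\,|\,z_3$ with $z_1,z_2$ in distinct children of $b_0$ and $z_3\in L(a_0)$ lies in $\mathcal R(T)$ but not in $\mathcal R(T')$.
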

\begin{proof}
(i): Let  $e\in V_v$ denote a child edge of $v$ that is not
incident with a leaf of $T$ and let $u\in V(T)$ denote the child 
of $v$ that is incident with $e$. Let 
$\omega: E(T)\to \mathbb R_{\geq 0}$ be an equidistant, proper
edge-weighting for $T$.  Assume for contradiction
that there exists a child $u'\in V(T)$ of $v$ that is a leaf such that
with $e_{u'}$ denoting the child edge of $v$ incident
with $u'$, we have that  $\{e,e_{u'}\}\not\in E_v$. 
Note that since $\omega$ is equidistant and
$u'$ is a leaf of $T$ whereas $u$ is not, 
we must have $\omega(e_{u'})>\omega(e)$.

Assume first that $|V_v|\geq 3$.
Let $T'$ denote the $X$-tree obtained from $T$
by deleting the edge $e_{u'}$ and adding the edge $e^*=\{u,u'\}$.
Clearly, $T'$ is not a refinement of $T$. Consider the
edge-weighting 
$$
\omega': E(T')\to \mathbb R_{\geq 0}:\,\,
f\mapsto\begin{cases}
\omega(f) & \mbox{ if } f\not=e^*,\\
\omega(e_{u'})-\omega(e) & \mbox{ else. } 
\end{cases}
$$
Then it is easy to see that $\omega'$ is equidistant and proper.
Since, by construction, $(T,\omega)$ and $(T',\omega')$
must be $\cL$-isometric it follows that $T'$ is a refinement of 
$T$ as $\cL$ corrals $T$; a contradiction.

Now assume that $|V_v|=2$. If $v\not=\rho_T$ then $v$ must have a parent
$w\in V(T)$. Let $T'$ denote the
$X$-tree obtained from $T$ as before except that we now suppress
$v$ as this has rendered it a vertex with a single child.
Let $\omega'$ be the edge-weighting for $T'$ as defined above
except that we put $\omega'(\{u,w\})=\omega(\{u,v\})+\omega(\{v,w\})$.
Then the same arguments as in the previous case yield a contradiction

If $v=\rho_T$ then let $T'$ denote the
$X$-tree obtained from $T$ by collapsing the edge $\{v,u\}$. Clearly,
$T'$ is not a refinement of $T$. Consider the edge-weighting
$\omega':E(T')\to \mathbb R_{\geq 0}$ for $T'$
defined by putting $\omega'=\omega|_{E(T')}$. Then the same arguments as
in the previous two cases yield a contradiction.

(ii): Assume for contradiction that there 
exists some $v\in V^o(T)-V^o(T)^-$ such that $G(\cL,v)$
is not connected. Then every child of $v$ is a leaf of $T$ and
there exist vertices $e_1, e_2\in V_v$ distinct such that $e_1$ 
and $e_2$ are not joined by a path
in $G(\cL,v)$.  Let $G_1$ and $G_2$ denote the 
connected components of $G(\cL,v)$ containing $e_1$ and $e_2$,
respectively. For all children $u\in V(T) $ of $v$, let $e_u$
denote the child edge of $v$ incident with $u$. Note that since $T$ is
non-degenerate there must exist a vertex
$w\in V(T)$ that is the parent of $v$.
Let $T'$ denote
the $X$-tree obtained from $T$ via the following process. 
Let $i=1,2$. Then, for all $u_i\in V(G_i)$, subdivide
the edge $e_{u_i}$ by a new vertex $p_{u_i}$ not already contained 
in $V(T)$. Next, identify all vertices $p_{u_i}$ into the vertex
$p_i$ and then delete all  copies of the edges $\{v,p_i\}$
from $T$. Finally, add the edge $\{w,p_i\}$ and suppress $p_i$
if $|V(G_i)|=1$. If $V_v=V(G_1)\cup V(G_2)$ then also
suppress the vertex $v$. The resulting tree is $T'$ and,
in either case, $T'$  is clearly not a refinement of $T$. 

Let $\omega$ denote an equidistant, proper edge-weighting for $T$.
Note that $\omega(e_1)=\omega(e)$ must hold for all
$e\in V_v$, as $v\in V^o(T)-V^o(T)^-$. For the following, 
assume first that neither
$p_1$ nor $p_2$ have been suppressed in the construction of
$T'$. Consider the edge-weighting
$$
\omega': E(T')\to \mathbb R_{\geq 0}:\,\,
e\mapsto\begin{cases}
\omega(e) & \mbox{ if } p_1,p_2\not\in e,\\
\omega(\{v,w\}) & \mbox{ if } e\in\{\{w,p_1\},\{w,p_2\} \}\\
\omega(e_1) & \mbox{ else.  } 
\end{cases}
$$
Then, by construction,  $\omega'$ is equidistant and proper
and $(T,\omega)$ and $T',\omega')$ are $\cL$-isometric.
Since, by assumption, $\cL$ corrals $T$ it follows that
$T'$ is a refinement of $T$; a contradiction.

In case one of $p_1$ and $p_2$ or both of them have been 
suppressed in the construction of $T'$ the definition of the
edge-weighting $\omega'$ for $T'$ is similar to the one above
 thus leading  to a contradiction in these cases too.
\end{proof}

The next result is a strengthening of 
Lemma~\ref{lem:corraling-implies}(i).
To state it, we require further terminology concerning child-edge 
graphs. Suppose $T$ is an $X$-tree, $v\in V^o(T)^-$, 
and $\cL\subseteq {X\choose 2}$ is a non-empty set of cords.
Then we denote by $E_l(v)\subseteq E(T)$ the set of child edges of $v$ 
that are incident with a leaf of $T$ and by $E_s(v)\subseteq E(T)$ the
set of child
edges of $v$ that are not contained in $E_l(v)$. Note that $E_s(v)$
is empty if and only if $v$ is the parent of a pseudo-cherry of $T$. 
Also note that $E_l(v)=\emptyset$ might hold. 
Clearly, if neither of them is the empty-set then
$\{E_l(v),E_s(v)\}$ is a partition of $V_v$.
For $v\in V^o(T)^-$, we say that $G(\cL,v)$ is {\em rich} if
the subgraph $G_T(\cL,v)_s$ of $G_T(\cL,v)$ 
induced by $E_s(v)$ is a clique,
and, in case
$E_l(v)\not=\emptyset$, we have for all $e\in E_l(v)$ and 
all $e'\in E_s(v)$ that
$\{e,e'\}\in E_v$. As before we will write $G(\cL,v)_s$ rather
than $G_T(\cL,v)_s$ if there is no ambiguity with regards to which
$X$-tee $T$ we are referring to.
Note that with $T$ and $v$ as above, if $E_l(v)=\emptyset$ then
$G(\cL,v)$ is rich if and only if $G(\cL,v)_s$ is a clique.

To illustrate these concepts consider the set of cords $$\cL=\{ac,ae,ag,bd,be,bh,ce,cg,eh,cd,ef,gh,ai\}$$ on $X=\{a,b,c,d,e,f,g,h,i\}$ and the $X$-tree $T$ depicted in Fig. 2$(i)$. For $v$ as indicated in Fig. 2$(i)$ we depict the child edge graph $G(\cL,v)$ in Fig. 2$(ii)$ which is clearly rich.
\begin{figure}[h]
\begin{center}
\includegraphics[scale=0.5]{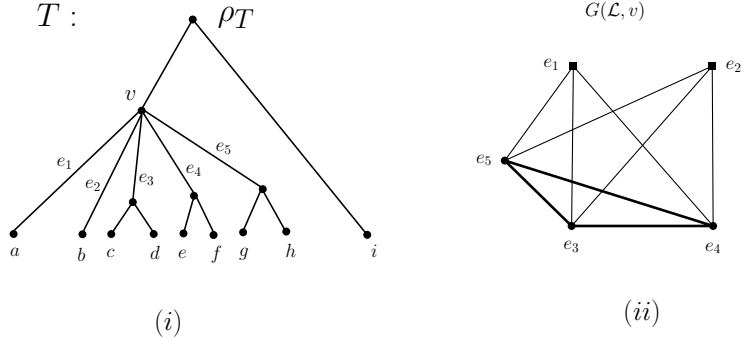}
\end{center}
\label{richExample}
\caption{$(i)$ The $X$-tree $T$ for $X=\{a,b,c,d,e,f,g,h,i\}$. $(ii)$ The child-edge graph $G(\cL,v)$ for $\cL$ as indicated in the text and $T$ and $v$ as in $(i)$. For ease of readability, the vertices in $E_l(v)$ are marked by a square and those in $E_s(v)$ by a dot. The edges of the graph $G(\cL,v)_s$ are represented by thick lines.}
\end{figure}

\begin{proposition}\label{prop:corraling-impliesI}
Suppose that $T$ is a non-degenerate $X$-tree and that
$\cL\subseteq {X\choose 2}$ is a non-empty set of cords. 
If $\cL$ corrals $T$ then $G(\cL,v)$ must be rich,
for all vertices $v\in V^o(T)^-$.
\end{proposition}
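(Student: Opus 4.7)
By Lemma~\ref{lem:corraling-implies}(i), whenever $E_l(v)\neq\emptyset$ every edge of $E_l(v)$ is already adjacent in $G(\cL,v)$ to every edge of $E_s(v)$, so it suffices to prove that $G(\cL,v)_s$ is a clique. I will argue this by contradiction. Suppose there exist distinct $e_1=\{v,u_1\}$ and $e_2=\{v,u_2\}$ in $E_s(v)$ with $\{e_1,e_2\}\notin E_v$. Then $u_1,u_2\in V^o(T)$, and by definition of $E_v$ no cord $ab\in\cL$ has one endpoint in $L(u_1)$ and the other in $L(u_2)$. The plan is to construct an $X$-tree $T'$ that is not a refinement of $T$, together with proper, equidistant edge-weightings $\omega$ of $T$ and $\omega'$ of $T'$ such that $(T,\omega)$ and $(T',\omega')$ are $\cL$-isometric; this will contradict the hypothesis that $\cL$ corrals $T$.

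For any proper, equidistant edge-weighting $\omega$ of $T$ and any interior vertex $w$, write $h_w:=D_{(T,\omega)}(w,x)$, a quantity independent of $x\in L(w)$ by (E1). I first fix $\omega$ so that $h_{u_1}>h_{u_2}$; this is always possible since a proper, equidistant $\omega$ corresponds to any strictly decreasing height function from $\rho_T$ down to the leaves, and $h_{u_1}$ and $h_{u_2}$ can be chosen independently subject only to the usual strict monotonicity. The tree $T'$ is obtained from $T$ by deleting the edge $\{v,u_2\}$, adding the new edge $\{u_1,u_2\}$, and then suppressing $v$ if this leaves $v$ with only $u_1$ as a child (merging the two edges now incident with $v$ into a single edge $\{w,u_1\}$ when $v$ has a parent $w$, or promoting $u_1$ to be the root of $T'$ when $v=\rho_T$). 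The weighting $\omega'$ agrees with $\omega$ on all surviving edges, assigns weight $h_{u_1}-h_{u_2}$ to $\{u_1,u_2\}$, and weight $h_w-h_{u_1}$ to the merged edge $\{w,u_1\}$ when applicable.

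Three verifications then finish the argument. Properness and equidistance of $\omega'$ follow at once: the inequalities $h_{u_1}>h_{u_2}$ and $h_w>h_v>h_{u_1}$ give positivity of the new and merged edge-weights, while any leaf $x\in L(u_2)$ reaches $u_1$ in $T'$ over total length $h_{u_2}+(h_{u_1}-h_{u_2})=h_{u_1}$, matching every other leaf below $u_1$. For $\cL$-isometry, distances within $L(u_1)$, within $L(u_2)$, or between two leaves both outside $L(v)$ are preserved because the corresponding subtrees are untouched, and for a cord with one endpoint in $L(u_1)\cup L(u_2)$ and the other elsewhere the lowest common ancestor is the same strict ancestor of $v$, at the same height, in both $T$ and $T'$ (this ancestor survives the suppression of $v$), so by equidistance the distance is unchanged. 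The only pairs of leaves whose distances change are those with one endpoint in $L(u_1)$ and the other in $L(u_2)$, and those are excluded from $\cL$ by assumption. Finally, $T'$ is not a refinement of $T$: when $|V_v|\geq 3$, $T$ and $T'$ have the same edge count but induce different triplets on $\{x_1,x_2,y\}$ for any $x_1\in L(u_1),\,x_2\in L(u_2),\,y\in L(u_3)$ with $u_3$ a further child of $v$ (a star in $T$, the triplet $x_1x_2|y$ in $T'$), so $T$ and $T'$ are inequivalent; when $|V_v|=2$, the suppression of $v$ strictly reduces the number of edges, so no sequence of edge-collapses in $T'$ can recover $T$. The main obstacle is the bookkeeping around lowest common ancestors in the suppression case, together with arranging at the start that $h_{u_1}>h_{u_2}$, which is exactly what keeps $\omega'$ both proper and equidistant.
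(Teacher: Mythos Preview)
Your proof is correct and follows essentially the same route as the paper: reduce to the clique property of $G(\cL,v)_s$ via Lemma~\ref{lem:corraling-implies}(i), then re-parent the subtree at $u_2$ below $u_1$ to produce an $\cL$-isometric $(T',\omega')$ that fails to refine $T$. The only differences are organizational---you choose $\omega$ with $h_{u_1}>h_{u_2}$ at the outset (the paper keeps $\omega$ arbitrary and splits into the cases $\omega(e)>\omega(e')$ and $\omega(e)=\omega(e')$), and you handle $|V_v|=2$ by explicit suppression rather than ruling it out via the connectedness clause of Lemma~\ref{lem:corraling-implies}.
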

\begin{proof}
Let $\omega$ denote an equidistant, proper 
edge-weighting for $T$ and assume for contradiction that 
there exists a vertex $v\in V^o(T)^-$ 
such that $G(\cL,v)$ is not rich. 
 We first show that $G(\cL,v)_s$ 
must be a clique. Suppose $G(\cL,v)_s$ is not a clique, that is,
there exist distinct child edges $e$ and $e'$ of $v$ contained
in $V(G(\cL,v)_s)$ such that with $v_e$ and $v_{e'}$ denoting the
children of $v$ incident with $e$ and $e'$, respectively,
we have, for all $a\in L(v_e)$ and all $b\in L(v_{e'})$ that 
$ab\not\in \cL$. Note that $|V_v|\geq 3$ must hold. Indeed,
if $|V_v|=2$ then $V_v=\{e,e'\}$ and so $\{e,e'\}\in E_v$  since,
by Lemma~\ref{lem:corraling-implies},  
$G(\cL,v)$ is connected. But then $G(\cL,v)$ is a clique and
so $G(\cL,v)_s$ is a clique; a contradiction.
Without loss of generality
assume that $\omega(e)\geq \omega(e')$. 

If $\omega(e)> \omega(e')$
then consider the $X$-tree $T'$ obtained from $T$ by 
deleting the edge $\{v,v_e\}$
and attaching $v_e$ to $v_{e'}$ via the edge $e^*=\{v_e,v_{e'}\}$.
Clearly, $T'$ is not a refinement of $T$. Consider the
edge-weighting $\omega'$ for $T'$ defined by putting
$$
\omega': E(T')\to \mathbb R_{\geq 0}:\,\,
f\mapsto\begin{cases}
\omega(f) & \mbox{ if } f\not=e^*, \\
\omega(e) -\omega(e') &\mbox{ else. }
\end{cases}
$$
Clearly, $\omega'$ is equidistant and proper and, by construction,
$(T,\omega)$ and $(T',\omega')$ are $\cL$-isometric. Since $\cL$
corrals $T$ it follows that $T'$ must be a refinement; a 
contradiction.

If $\omega(e)= \omega(e')$ then consider the $X$-tree 
$T'$ obtained from
$T$ by first identifying the vertices $v_e$ and $v_{e'}$ 
(keeping the label $v_e$) and then deleting one of the edges 
from $v$ to $v_e$. Again, $T'$ is clearly not a refinement of $T$.
Consider the edge-weighting $ \omega': E(T')\to \mathbb R_{\geq 0}$
defined as $\omega'=\omega|_{E(T')}$. Then the same 
arguments as before
imply that $T'$ is a refinement of $T$; a contradiction. Thus,
$G(\cL,v)_s$ is a clique, as required. 

Now if $E_l(v)=\emptyset$ then
$G(\cL,v)$ must be rich. So assume that $E_l(v)\not=\emptyset$.
But then Lemma~\ref{lem:corraling-implies}(i), implies
that $G(\cL,v)$ must be rich.
\end{proof}

We conclude this section with a result that will be useful for establishing
the aforementioned characterization of weak lassos in terms of 
child-edge graphs (Theorem~\ref{theo:characterization-corraling}).
Its proof relies on the well-known fact that an $X$-tee $T'$ is a 
refinement of an $X$-tree $T$ if and only if 
$\mathcal R(T)\subseteq \mathcal R(T')$ (see e.g. 
\cite[Theorem 6.4.1]{SS03}).

\begin{lemma}\label{lem:corraling-unique-cherry}
Suppose that $T$ is an $X$-tree that has a unique cherry $x,y$
and that $\cl\subseteq {X\choose 2}$ is a set of cords that contains the
set
$\{xy\}\cup\{az \, :\, z\in X-\{x,y\} \mbox{ and } a=x \mbox{ or } a=y\}$.
Then $\cL$ corrals $T$.
\end{lemma}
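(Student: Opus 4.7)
The goal is to establish $\mathcal{R}(T) \subseteq \mathcal{R}(T')$ for any $X$-tree $T'$ with equidistant proper edge-weighting $\omega'$ such that $(T,\omega)$ and $(T',\omega')$ are $\cL$-isometric; by the triplet-inclusion characterization of refinement recalled just before the lemma, this yields that $T'$ refines $T$, hence that $\cL$ corrals $T$.

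The first main step is to show that $\{x, y\}$ is also a cherry in $T'$. Equidistance applied at the parent $p$ of the cherry $\{x, y\}$ in $T$ gives $D_{(T,\omega)}(x, z) = D_{(T,\omega)}(y, z)$ for every $z \in X - \{x, y\}$, and since $xz, yz \in \cL$, the $\cL$-isometry transfers this equality to $T'$. If $\{x, y\}$ were not a cherry in $T'$, let $p'$ denote the MRCA of $x, y$ in $T'$ and, WLOG, pick $x$ not to be a direct child of $p'$; the child $c$ of $p'$ on the path to $x$ is then interior, and since $T'$ has no non-root degree-two vertices, $c$ possesses a second child subtree containing some leaf $w \in X - \{x, y\}$. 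Combining equidistance in $T'$ (which gives $D_{(T',\omega')}(x, p') = D_{(T',\omega')}(y, p')$) with $\omega'(\{c, p'\}) > 0$ (from properness on the interior edge $\{c, p'\}$), a direct path computation yields $D_{(T',\omega')}(y, w) - D_{(T',\omega')}(x, w) = 2\omega'(\{c, p'\}) > 0$, contradicting the transferred equality.

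The second step is a case analysis on a triplet $\tau = ab|c \in \mathcal{R}(T)$, according to its intersection with $\{x, y\}$. If $\{a, b\} = \{x, y\}$, then $\tau = xy|c$ belongs to $\mathcal{R}(T')$ directly from the first step. If exactly one of $a, b$ lies in $\{x, y\}$, say $a = x$, then $b, c \in X - \{x, y\}$ and both $xb, xc$ lie in $\cL$, so Lemma~\ref{lem:dist-transfer}(ii) applied to $\{x, b, c\}$ (with the correspondence $a \mapsto x$, $a' \mapsto b$, $b \mapsto c$) yields $\tau \in \mathcal{R}(T')$; the case $a = y$ is symmetric via $yb, yc \in \cL$.

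The main obstacle is the remaining case $\tau = ab|c$ with $a, b \in X - \{x, y\}$, since $D(a, b)$ is not controlled by $\cL$ and Lemma~\ref{lem:dist-transfer}(ii) does not apply directly. The plan here is to exploit the unique cherry hypothesis on $T$ together with the preserved distance profiles $D(x, \cdot)$ and $D(y, \cdot)$. When $c = x$ (or $c = y$), the equality $D(x, a) = D(x, b)$ forced by $\tau \in \mathcal{R}(T)$ transfers to $T'$ and constrains $T'|_{\{a, b, x\}}$ to be either $ab|x$ or the star on $\{a, b, x\}$. The star alternative must be ruled out using the unique cherry hypothesis: since no pair of leaves of $T$ other than $\{x, y\}$ shares a parent, the ultrametric identity $\mathrm{depth}(\mathrm{MRCA}(x, z)) = H - D(x, z)/2$ (for the common leaf-depth $H$) combined with the preserved distances $D(x, \cdot)$ pins down the attachment of every other leaf along the root-path of $x$ in $T'$ rigidly enough to force $ab|x$. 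Triplets $\tau = ab|c$ with $\{a, b, c\} \subseteq X - \{x, y\}$ are then reduced to the preceding cases by pivoting through $x$ (or $y$) and chaining Lemma~\ref{lem:dist-transfer}(ii) with the cherry structure already established in $T'$.
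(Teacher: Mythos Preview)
Your ``main obstacle'' case contains a genuine error: the assertion that ``no pair of leaves of $T$ other than $\{x,y\}$ shares a parent'' does \emph{not} follow from $\{x,y\}$ being the unique cherry. A cherry is a pseudo-cherry of size exactly two, so $T$ may well have a pseudo-cherry of size three or more, whose leaves share a parent without contradicting the hypothesis. In fact the lemma read literally is false, so this gap cannot be repaired. Take $X=\{a,b,c,x,y\}$ and let $T$ have root $\rho$ with children $v_1,v_2$, where $v_1$ has leaf children $a,b,c$ and $v_2$ has leaf children $x,y$; then $\{x,y\}$ is the unique cherry and $\cL=\{xy,xa,xb,xc,ya,yb,yc\}$ satisfies the hypothesis. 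The $X$-tree $T'$ with root $\rho'$ having children $a,b,c,v_2'$ and $v_2'$ having children $x,y$ admits an equidistant proper weighting that is $\cL$-isometric to any such weighting of $T$, yet $ab|x\in\mathcal{R}(T)\setminus\mathcal{R}(T')$, so $T'$ does not refine $T$ and $\cL$ does not corral $T$. Your attempt to rule out the star alternative for $T'|_{\{a,b,x\}}$ therefore cannot succeed.

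The paper's own proof avoids all of this because it tacitly relies on the only shape to which the lemma is ever applied (Case~(i) of the proof of Theorem~\ref{theo:characterization-corraling}, when $T_1$ is the star on $X_1$): there $T$ is the ``near-star'' in which every leaf other than $x,y$ is adjacent to $\rho_T$, so that $\mathcal{R}(T)=\{xy|z:z\in X-\{x,y\}\}$. Under that reading the paper simply invokes Lemma~\ref{lem:dist-transfer}(ii) once per $z$, using that both $xy$ and $yz$ are controlled, to obtain $xy|z\in\mathcal{R}(T')$ and hence $\mathcal{R}(T)\subseteq\mathcal{R}(T')$. For this $T$ your first two cases are correct and already suffice; the ``main obstacle'' case is vacuous.
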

\begin{proof}
By Theorem~\ref{theo:first-characterization}, it suffices to show that
$\cL':=\cL(x,y)\cup\{xy\}$ corrals $T$. Suppose there exists an $X$-tree
$T'$ and equidistant, proper edge-weightings $\omega$ and $\omega'$ of $T$
and $T'$, respectively, such that $(T,\omega)$ and $(T',\omega')$
are $\cL'$-isometric. To see that $T'$ is a refinement of $T$ it suffices
to show that $xy|z\in \mathcal R(T')$ holds for all $z\in X-\{x,y\}$.
Let $z\in X-\{x,y\}$. Then $zy\in \cL'$. Combined with the facts that
$xy\in \cL'$ and that $xy|z\in \mathcal R(T)$ it follows, by 
Lemma~\ref{lem:dist-transfer}, that  $xy|z\in \mathcal R(T')$, as required.
\end{proof}

\section{A characterization of a weak lasso
}\label{sec:corral}

In this section, we characterize sets of cords of $X$ that corral
an $X$-tree $T$ in terms of two properties on the child edge-graphs 
associated to the interior vertex of $T$. In addition, we present
a simple example that illustrates that this characterization does not
hold if the equidistant requirement for the two edge-weightings
mentioned in the definition of such a lasso is dropped.

\begin{theorem}\label{theo:characterization-corraling}
Suppose that $T$ is a non-degenerate $X$-tree and that 
$\cL\subseteq {X\choose 2}$ is a non-empty set of cords. 
Then  $\cL$ is a weak lasso for $T$ if and only if 
the following two properties hold
\begin{enumerate}
\item[(C1)] $G(\cL,v)$ is rich, for all $v\in V^o(T)^-$
\item[(C2)] $G(\cL,v)$ is connected,
for all $v\in V^o(T)-V^o(T)^-$.
\end{enumerate}
\end{theorem}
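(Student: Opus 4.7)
The forward direction is immediate from results already established: condition (C1) is exactly Proposition~\ref{prop:corraling-impliesI}, and condition (C2) is Lemma~\ref{lem:corraling-implies}(ii). So the substance of the theorem lies in the reverse direction, which I would establish by induction on $|X|$.

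For the base case $|X|=3$, the non-degeneracy of $T$ forces $T$ to be a triplet $ab|c$, with $V^o(T)=\{\rho_T,v\}$ where $v$ is the parent of the cherry $\{a,b\}$. Condition (C2) applied at $v\in V^o(T)-V^o(T)^-$ forces $ab\in\cL$, and since $E_s(\rho_T)$ contains only the single edge $\{\rho_T,v\}$ and $E_l(\rho_T)=\{e_c\}$, richness of $G(\cL,\rho_T)$ forces $ac\in\cL$ or $bc\in\cL$. In either case, Lemma~\ref{lem:dist-transfer}(ii) transports $ab|c\in\mathcal{R}(T)$ into $\mathcal{R}(T')$, yielding refinement of $T$ by $T'$ via the well-known equivalence between refinement and triplet-set inclusion. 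For the inductive step, I would exploit the weak-lasso case of Theorem~\ref{theo:first-characterization} to reduce to a smaller instance: choose a pseudo-cherry $L$ of $T$ with parent $v$ (which exists since $T$ is non-degenerate); by (C2), $G(\cL,v)$ is a connected graph on the at-least-two vertex set $\{e_z:z\in L\}$, and so there exist $x,y\in L$ with $xy\in\cL$. It then suffices to show that $\cL^+:=\cL_1(x,y)\cup\{xy\}$ is a weak lasso for $T$; in this modified cord set, the leaf $x$ appears only in the cord $xy$. I would then pass to the $X^*$-tree $T^*:=T|_{X^*}$ with $X^*:=X\setminus\{x\}$ and the cord set $\cL^*:=\cL_1(x,y)\subseteq {X^*\choose 2}$, verify that $T^*$ and $\cL^*$ satisfy (C1) and (C2), and invoke the inductive hypothesis to conclude that $\cL^*$ corrals $T^*$.

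The main obstacle I anticipate lies in two places. First, when $|L|=2$, deleting $x$ renders $v$ a degree-two vertex that is suppressed in $T^*$, so the child-edge graph at the (former) parent of $v$ in $T^*$ acquires an extra vertex via $v$'s surviving child edge; one must check that the replacements of $x$-cords by $y$-cords implicit in $\cL^*$ still witness the richness and connectedness required at this modified vertex. Second, once the inductive hypothesis yields that $T'|_{X^*}$ refines $T^*$ whenever $(T,\omega)$ and $(T',\omega')$ are $\cL^+$-isometric, one must promote this to the statement that $T'$ refines $T$. Here I would use the cord $xy\in\cL^+$ together with the equidistance of $\omega'$ and Lemma~\ref{lem:dist-transfer} to localise $x$ alongside $y$ in $T'$, so that $x$ and $y$ lie in a common pseudo-cherry of $T'$ compatible with the pseudo-cherry of $T$ containing them, thereby completing the refinement claim on all of $X$.
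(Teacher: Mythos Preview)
Your approach is essentially the paper's: induction on $|X|$, delete a leaf $x$ from a pseudo-cherry, verify (C1)--(C2) for the reduced pair $(T^*,\cL^*)$, then promote. The forward direction and base case are fine (your base case is in fact more explicit than the paper's). Two points in the inductive step need more than you indicate.

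First, the inductive hypothesis as stated requires $T^*$ to be non-degenerate. When $|L|=2$ and deleting $x$ collapses $T$ to the star-tree on $X^*$, induction is unavailable; the paper handles this sub-case separately via Lemma~\ref{lem:corraling-unique-cherry}, which your outline omits.

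Second, and more substantively, your promotion step is under-specified. Knowing only $D_{(T,\omega)}(x,y)=D_{(T',\omega')}(x,y)$ together with ``$T'|_{X^*}$ refines $T^*$'' does not by itself locate $x$ in $T'$, and the claim that $x$ and $y$ end up in a common pseudo-cherry of $T'$ is neither what one can deduce nor what is needed. What the paper actually proves is that $xy|a\in\mathcal R(T')$ for every leaf $a$ below the parent $w$ of $v_x$ but outside $L(v_x)$. This uses richness of $G(\cL,w)$ (Property~(C1)) to supply, for each child $u\neq v_x$ of $w$, a cord joining some $b\in L(u)$ to some $z\in L(v_x)$, and then---when $|L|\geq 3$---connectedness of $G(\cL,v_x)$ (Property~(C2)) to chain from $z$ to both $x$ and $y$ so that Lemma~\ref{lem:dist-transfer} transfers the triplet. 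Your sketch mentions only the cord $xy$, which is not enough input for Lemma~\ref{lem:dist-transfer} here.

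One place your outline is cleaner than the paper: for $|L|\geq 3$ you take any $x,y\in L$ with $xy\in\cL$, whereas the paper invokes the existence of a non-cut vertex to choose $x$ so that \emph{deleting} $e_x$ keeps $G(\cL,v)$ connected. Your implicit argument---that passing to $\cL_1(x,y)$ amounts to identifying $e_x$ with $e_y$ in $G(\cL,v)$, and identifying two vertices of a connected graph preserves connectedness---works and avoids the extra citation.
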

\begin{proof}
Assume first that $\cL$ corrals $T$. Then Properties~(C1) and (C2)
must hold by Proposition~\ref{prop:corraling-impliesI}
and Lemma~\ref{lem:corraling-implies}(ii).

To see the converse, assume that Properties~(C1) and (C2) hold.
We prove that $\cL$ must be a weak lasso  for $T$
by induction on the size $n$ of $X$. Note that the statement
clearly holds in case  $n=3$ as then $T$ is binary and a refinement
of a binary $X$-tree is the tree itself.

Assume that the statement holds for all finite sets of size $n\geq 3$ and let
$X$ denote a set of size $n+1$. Let $T$ be a non-degenerate $X$-tree and let 
$\cL\subseteq {X\choose 2}$ denote a non-empty set of cords such that 
Properties~(C1) and (C2) are satisfied for $T$ and $\cL$. Note that
$T$ must contain at least one pseudo-cherry. To see
that $\cL$ corrals $T$, let $T'$ denote
an $X$-tree and $\omega$ and $\omega'$ equidistant, proper edge-weightings
for $T$ and $T'$, respectively, such that
$(T,\omega)$ and $(T',\omega')$ are $\cL$-isometric. We 
distinguish between the cases that (i) every
pseudo-cherry of $T$ is in fact a cherry of $T$ and (ii)  that $T$
contains a pseudo-cherry that has at least three leaves.

Case (i): Assume that every pseudo-cherry of $T$ is a cherry and let
$x,y\in X$ such that $x,y$ is a cherry of $T$. 
Note that since $n\geq 4$ and $T$
is non-degenerate, there must exist a vertex $w\in V(T)$
that is the parent of $v_x$ (which is itself the parent of
the cherry $x,y$). Put $X_1=X-\{x\}$ and
$\cL_1=\cL_1(x,y)$. Note that since $G(\cL,v_x)$
is connected, it immediately follows that $xy\in \cL$ and that
since $G(\cL,w)$ is rich we must have $\cL_1\not=\emptyset$. 
Let $T_1$ denote the $X_1$-tree obtained from $T$ by deleting $x$ and its
incident edge and suppressing $v_x$ as this has rendered it a vertex with a
single child. Let 
$$
\omega_1:E(T_1)\to \mathbb R_{\geq 0} : 
e\mapsto\begin{cases}
\omega(e) & \mbox{ if } e\not=\{w,y\} \\
\omega(\{w,v_x\}) +\omega(\{v_x,y\}) &\mbox{ else. }
\end{cases}
$$
Clearly, $\omega_1$ is equidistant and proper and $T_1$ is either the
star-tree on $X_1$ or not. Assume first
that $T_1$ is the star-tree 
on $X_1$. Then $x,y$ is the unique cherry
of $T$ and all elements $z\in X_1-\{y\}$ are adjacent with the root 
$\rho_T$ of $T$ which is $w$. But then
Properties~(C1) and (C2) combined with
 Lemma~\ref{lem:corraling-unique-cherry} imply that $\cL$ corrals $T$.

So assume that $T_1$ is non-degenerate. We claim that
Properties (C1) and (C2) hold for $T_1$ and $\cL_1$. We start with 
establishing Property (C1). 
Assume for contradiction that there exists some
$u\in V^o(T_1)^-$ such that
$G_{T_1}(\cL_1,u)$ is not rich. We first show that 
$G_{T_1}(\cL_1,u)_s$ must be a clique. Assume for contradiction that 
$G_{T_1}(\cL_1,u)_s$ is not a clique, then $|E_s(u)|\geq 2$ and so for all 
$a\in L_{T_1}(u_e)$ and all $b\in L_{T_1}(u_{e'})$
we have that $ab\not\in\cL_1$ where $u_e$ and $u_{e'}$ denote the children
of $u$ in $T_1$ incident with $e$ and $e'$, respectively. Note that
$V^o(T_1)\cup \{v_x\}=V^o(T)$ and that $u=w$ must hold. Indeed 
assume for contradiction that $u\not=w$. 
Then $v_x$ is not a child of $u$ in $T$ and so $u_e$ and $u_{e'}$
are also children of $u$ in $T$. 
Since $G_T(\cL,u)_s$ is a clique by Property~(C1), there must
exist $a\in L_T(u_e)$ and $b\in L_T(u_{e'})$ such that $ab\in \cL$.
If $x\not\in\{a,b\}$ then,
by the definition of $\cL_1$, we have $ab\in\cL_1$; a contradiction.
Thus, $x\in\{a,b\}$. Without
loss of generality assume that $x=a$. Then $y\in L_T(u_e)$ and,
again by the definition of $\cL_1$, we obtain 
$yb\in \cL_1$; a contradiction.
Thus $u=w$, as required. 
But then $y\in \{u_e,u_{e'}\}$ and so one of
$e$ and $e'$ is not a vertex in $G_{T_1}(\cL_1,u)_s$; a
contradiction. Thus, $G_{T_1}(\cL_1,u)_s$ must be a clique, as required. 

Since, by assumption, $G_{T_1}(\cL_1,u)$ is not rich, there must therefore
exist a leaf $z$ of $T_1$ with $e'=\{u,z\}\in E(T_1)$ holding
and some vertex $e$ in $G_{T_1}(\cL_1,u)_s$ such that $\{e,e'\}$ is not
an edge in $G_{T_1}(\cL_1,u)$. Let $u_e$ denote the child of $u$
in $T_1$ incident with $e$. If $u\not=w$ then since the children
of $u$ in $T$ are precisely the children of $u$ in $T_1$
and, by Property~(C1), $G_T(\cL,u)$ is rich we obtain
a contradiction. Thus, $u=w$. But then $y=z$ must hold. 
Since $G_T(\cL,w)$ is rich there must exist some $a\in\{x,y\}$
and some $b\in L(u_e)$ such that $ab\in \cL$.
But then $yb\in \cL_1$ and so $\{e,e'\}$ is an edge in $G_{T_1}(\cL_1,u)$,
a contradiction.

We next establish that Property~(C2) is satisfied by $T_1$ and $\cL_1$
which will conclude the proof of the claim. Let 
$u\in V^o(T_1)-V^o(T_1)^-$. Then $u$ must be the parent of a pseudo-cherry of 
$T_1$.  If $u\not=w$ then since, by assumption, 
every pseudo-cherry of $T$ is a
cherry of $T$ it follows that $u$ is the parent of a cherry of $T_1$.
But then $G_{T_1}(\cL_1,u)$ is connected as $G_{T_1}(\cL_1,u)=G_T(\cL,u)$
and Property~(C2) is satisfied by $T$ and $\cL$. So assume that $u=w$. Then $u$
is the parent of $v_x$ in $T$ and all children of $u$ in $T$ but $v_x$ are
leaves of $T$. Since, by Property~(C1), $G_T(\cL,u)$
is rich, there exists for all children $z$ of $u$ that are leaves of $T$ some
$b_z\in \{x,y\}$ such that $b_zz\in \cL$. But then $yz\in \cL_1$ for all
such children $z$ of $u$ and thus $G_{T_1}(\cL_1,u)$ is connected, 
as required. 
Thus Property (C2) is also satisfied by $T_1$ and $\cL_1$ which completes the
proof of the claim. By induction, it follows that $\cL_1$ is a weak 
lasso for $T_1$.

Let $T'_1$ denote the $X_1$-tree obtained from $T'$ by deleting $x$
and its incident edge and suppressing the parent vertex of $x$ in $T_1'$
if this has rendered it a vertex with a single child. Let $\omega_1'$ 
denote the edge-weighting of $T_1'$ that is canonically induced by $\omega_1$
on the edges of $T_1'$. Then, by Lemma~\ref{lem:dist-transfer} 
combined with the assumption that  $(T,\omega)$ and $(T',\omega')$ are 
$\cL$-isometric, it follows that $(T_1,\omega_1)$ and $(T_1',\omega_1')$ are 
$\cL_1$-isometric. Since $\cL_1$ is a weak lasso for $T_1$ this implies that
$T_1'$ is a refinement for $T_1$. 

To establish that $\cL$ corrals $T$ it
now suffices to show that $xy|a\in \mathcal R(T')$ holds for
all $a\in L(w)-\{x,y\}$. To this end, note that since 
Property (C1) is satisfied
by $T$ and $\cL$, we have for all children $a\in L(w)$ that are
leaves of $T$ that there exists some $b\in L_T(v_x)=\{x,y\}$ such that
$ab\in \cL$. Combined with Lemma~\ref{lem:dist-transfer}
and $xy\in \cL$, it follows that 
$D_{(T,\omega)}(x,a)=D_{(T',\omega')}(x,a)=
D_{(T,\omega)}(y,a)=D_{(T',\omega')}(y,a)$. Since $xy|a\in\mathcal R(T)$
we obtain $xy|a\in\mathcal R(T')$, as required. This completes
the proof of the induction step in this case.

Case (ii): Assume that $T$ contains a pseudo-cherry $\frak c$ 
of size three or more. Let $v\in V^o(T)$ denote the parent of 
$\frak c$ in $T$. Then, by Property~(C2), $G(\cL,v)$ is connected. 
Since in any connected graph there exists a vertex whose removal (plus incident edges) leaves the graph connected (see \cite[Proposition 1.4.1]{Diestel05} where a more general result is established), it follows that we may choose some $x\in X$
such that the graph $G^-(\cL,v)$ obtained from $G(\cL,v)$ by deleting
$e_x$ and its incident edges is connected. Note that since $x$ is
a leaf in $\frak c$, we have $v=v_x$.

Put $X_1:=X-\{x\}$, choose some $y\in L(v_x)$ such that $xy\in \cL$, and put
$\cL_1:=\cL(x,y)$. Consider the
$X$-tree $T_1$ obtained from $T$ by deleting $x$ and its incident
edge. 
We claim again that $G_{T_1}(\cL_1,u)$ satisfies Properties~(C1) and (C2)
for all $u\in V^o(T_1)$ as specified in those conditions. To see this, 
note first that since $v_x$ has at least two children in $T_1$, we have
 $V^o(T_1)= V^o(T)$ and so also $V^o(T_1)^-= V^o(T)^-$.

We start with establishing Property~(C1).
Assume for contradiction that there exists
some vertex $u\in  V^o(T_1)^-$ such that $G_{T_1}(\cL_1,u)$ is not 
rich. We again show first that $G_{T_1}(\cL_1,u)_s$ is a clique. Assume for
contradiction that this is not the case and let $e$, $e'$, $u_e$, and 
$u_{e'}$ be as in the corresponding situation in the previous case. Note that
since $u\in V^o(T_1)^-= V^o(T)^-$ and
$v_x\in V^o(T)-V^o(T)^-$
we have $v_x\not=u$. 
But then similar arguments as the ones used to show 
in Case (i) that $w\not=u$ in the context of establishing
that $G_{T_1}(\cL_1,u)_s$ is a clique yield a contradiction.
Thus, $G_{T_1}(\cL_1,u)_s$ must be a clique, as required.
As in the previous
case, there must therefore 
exist some $z\in X_1$ such that $e=\{z,u\}\in E(T_1)$ and some
vertex $e'$ in $G_{T_1}(\cL_1,u)_s$ such that $\{e,e'\}$ is not an edge in
$G_{T_1}(\cL_1,u)$. Note that the same arguments as above imply that 
$u\not=v_x$.
By the definition of $\cL_1$, it follows that
$x$ must be the unique leaf in $L_T(u_{e'})$ such that
$xz\in \cL$. Since $y$ and $x$ are leaves in the same pseudo-cherry of $T$,
we obtain $yz\in\cL_1$. Consequently, $\{e,e'\}$ is an edge in 
$G_{T_1}(\cL_1,u)$, a contradiction. Thus, Property~(C1) is satisfied by
$T_1$ and $\cL_1$.

To see that $T_1$ and $\cL_1$ satisfy Property~(C2)
assume without loss of generality that there exists some 
$u\in V^o(T)-V^o(T)^-$ such that $G_{T_1}(\cL_1,u)$ is not connected.
Then $u\not=v_x$, by the choice of $x$. Since $u$ is the parent of 
a pseudo-cherry in $T$ it follows that $u$ must be the
parent of the same pseudo-cherry in $T_1$. But then 
$G_{T_1}(\cL_1,u)=G_T(\cL,u)$ and so $G_{T_1}(\cL_1,u)$
must be connected as, by Property (C2), $G_T(\cL,u)$ is connected;
a contradiction.  This concludes the proof of the claim.
By induction, it follows that $\cL_1$ corrals $T_1$.

Let $T'_1$, $w$, and $\omega_1'$ be as in the previous case. Then, as in that 
case, $T_1'$ must be a refinement of $T_1$. 
We claim that
for all children $u$ of $w$ in $T$ distinct from $v_x$ and all $a\in L_T(u)$
we must have 
$$
xy|a\in \mathcal R(T').
$$

To see this, note first that Property (C1) implies 
for all such children $u$ of $w$ 
that there
must exist some $a_u\in L_T(u)$ and some $z_{v_x}\in L_T(v_x)$ such that 
$a_uz_{v_x}\in \cL$. Let $u$ denote a child of $w$ in $T$ distinct 
from $v_x$ and put $a= a_u$ and  $z=z_{v_x}$. We show first that 
\begin{eqnarray}\label{eqn:weak-lasso-equation}
D_{(T,\omega)}(a,y)= D_{(T',\omega')}(a,y) \mbox{ and }
D_{(T,\omega)}(a,x)= D_{(T',\omega')}(a,x).
\end{eqnarray}
Clearly, if $z\in\{x,y\}$  then one of the two equations in
(\ref{eqn:weak-lasso-equation}) must hold. 
Assume without loss of generality that $z\not=x$. 
Since $G_T(\cL,v_x)$ is connected by Property~(C2), it follows that
there exists a path $z=z_1,z_2,\ldots,z_k=x$, $k\geq 2$, from $z$ to $x$
in $G_T(\cL,v_x)$. But then 
Lemma~\ref{lem:dist-transfer}(i) implies  
$D_{(T,\omega)}(a,x)= D_{(T',\omega')}(a,x)$ since, for all $1\leq i\leq k-1$,
we have $z_iz_{i+1}\in \cL$. If $z\not=y$ then similar arguments
imply that $D_{(T,\omega)}(a,y)= D_{(T',\omega')}(a,y)$. 
If $z=y$ then $ya=za\in \cL$
and so $D_{(T,\omega)}(a,y)= D_{(T',\omega')}(a,y)$.
Thus both equations in (\ref{eqn:weak-lasso-equation}) must hold,
as required.
Combined with  $xy\in\cL$ (which holds by the choice of $y$) and
the fact that $xy|a\in \mathcal R(T)$ we obtain 
$$
xy|a\in \mathcal R(T')
$$
in view of Lemma~\ref{lem:dist-transfer}(ii). Thus the claim
follows if $|L_T(u)|=1$. 
So assume that $|L_T(u)|\geq 2$. Suppose $a'\in L_T(u)-\{a\}$.
Then $y|a'a\in \mathcal R(T_1')$ must hold as $T_1'$ is a refinement
of $T_1$ and $y|a'a\in \mathcal R(T_1)$. Since the only 
$\{x,y,a',a\}$-tree that can simultaneously display the triplets 
$y|a'a$ and $xy|a$ is the tree with cherries $x,y$ and $a',a$
and that tree is equivalent with the tree $T|_{\{x,y,a',a\}}$
it follows that $xy|a'\in \mathcal R(T')$, as
claimed.

Combined with the fact that 
$T'_1$ is a refinement of $T_1$ it follows that $T'$ is a refinement of $T$.
Hence, $\cL$ corrals $T$ which concludes the proof of the induction step
in this case too and, thus, the proof of the theorem.

\end{proof}

Note that Theorem~\ref{theo:characterization-corraling} 
immediately implies that for
a non-empty set $\cL$ of cords of $X$ to be a weak lasso
for a non-degenerate $X$-tree $T$, it must 
 be a {\em covering} of $X$, that is, 
$X=\bigcup_{A\in \mathcal L}A$.
Also note that Theorem~\ref{theo:characterization-corraling} immediately  implies that a minimum size weak lasso for $T$ 
must have
$$\sum_{v\in V^o(T)^-}({|V(G(\cL,v)_s)|\choose 2} +
|V(G(\cL,v)_s)|\times |V(G(\cL,v)_l)|)+\sum_{v\in V^o(T)-V^o(T)^-}|V_v|$$
cords. Thus, such a lasso 
has at most $|V^o(T)^-|{m\choose 2} +(m-1)|V^o(T)-V^o(T)^-|$
cords where $m=\max_{v\in V^o(T)^-}|V_v|$ and at least 
$(l-1)|V^o(T)|$ cords where $l=\min_{v\in V^o(T)^-}|V_v|$. Note that these
bounds are sharp in the case that all interior vertices of $T$ have the
same number $k$ of children. In the former case $T$ is such that
no interior vertex of $T$
that is not a parent of a pseudo-cherry is adjacent with a leaf of $T$.
In the latter case $T$ is the {\em bearded caterpillar tree} on $X$, that is,
$T$ is a (rooted) path and every vertex of that path is adjacent with
$k-1$ leaves except for the end vertex of $T$ that is not $\rho_T$ which has
$k$ children. 

Finally,  note that as the example presented 
in Fig.~\ref{corraling-counterexample} illustrates,
for Theorem~\ref{theo:characterization-topology} to hold 
the requirement that both proper edge-weightings are equidistant
in the definition of a weak lasso 
cannot be dropped. 
More precisely for $X=\{a,b,c,d,e,f\}$ and
$\cL=\{ab,bc,bd,af,ae\}$ the $X$-tree $T$ pictured on the left of that figure
(with the indicated edge-weighting $\omega$ ignored for the moment)
satisfies Properties (C1) and (C2) and the $X$-tree $T'$ depicted on
the right of that figure (again with the indicated edge-weighting $\omega'$
ignored where $0<\epsilon<1$) is clearly not a refinement of $T$. 
Also $\omega$ and $\omega'$
are obviously proper and, in the case of $\omega$,
equidistant and $(T,\omega)$ and $(T',\omega')$ 
are $\cL$-isometric.

\begin{figure}[h]
\begin{center}
\includegraphics[scale=0.25]{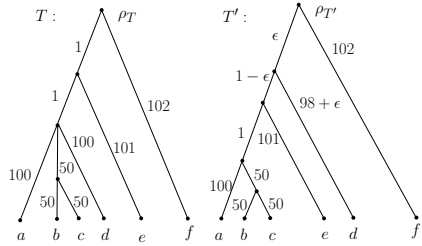}
\caption{\label{corraling-counterexample}
An example illustrating that for 
Theorem~\ref{theo:characterization-corraling} 
to hold the equidistance requirement in the 
definition of a weak lasso cannot be dropped (see text for details).}
\end{center}
\end{figure}

\section{A characterization of an equidistant lasso
}\label{sec:edge-weight}

In this section, we present a characterization of an equidistant
lasso $\cL\subseteq {X\choose 2}$ for an $X$-tree $T$ in terms 
of the child-edge graphs associated to the interior vertices of $T$. 
To establish it, we require a further
notation. Suppose $T$ is an $X$-tree and $v$ and $w$
are two vertices of $T$.
Then we denote by $E_T(v,w)$ the set of all edges of $T$
on the path from $v$ to $w$.

\begin{theorem}\label{theo:edge-weight}
Suppose $T$ is an $X$-tree and 
$\cL\subseteq {X\choose 2}$ is a non-empty set of cords.
Then the following are equivalent:
\begin{enumerate}
\item[(i)] $\cL$ is an equidistant lasso for $T$.
\item[(ii)] for every vertex $v\in V^o(T)$, the graph
$G(\cL,v)$ contains at least one edge.
\end{enumerate}
\end{theorem}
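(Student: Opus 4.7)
My plan is to work with the \emph{height function} $h_\omega(v) := D_{(T,\omega)}(v,x)$ attached to any equidistant proper edge-weighting $\omega$ of $T$, where $x$ is any leaf descendant of $v$; properties (E1) and (E2) together ensure that this is independent of the choice of $x$ and that $h_\omega(x) = 0$ for $x \in X$. Two elementary identities will drive both directions: first, for any two distinct leaves $a,b \in X$ one has $D_{(T,\omega)}(a,b) = 2\,h_\omega(w)$, where $w$ is the unique vertex of $T$ on the path from $a$ to $b$ that is closest to $\rho_T$ (the most recent common ancestor of $a$ and $b$); second, $\omega$ is recovered from the heights via $\omega(\{u,w\}) = h_\omega(u) - h_\omega(w)$ whenever $u$ is the parent of $w$. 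The link to child-edge graphs that I would record at the outset is that $G(\cL,v)$ contains at least one edge if and only if some cord $ab \in \cL$ has most recent common ancestor $v$ in $T$; this is immediate from the definition of the edge set, since any edge $\{e,e'\}$ of $G(\cL,v)$ is witnessed by a cord whose $a$-to-$b$ path enters $v$ along $e$ and leaves along $e'$.

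For the direction (ii)$\Rightarrow$(i), I would take equidistant proper edge-weightings $\omega,\omega'$ of $T$ that are $\cL$-isometric. For each $v \in V^o(T)$, hypothesis (ii) furnishes a cord $ab \in \cL$ whose most recent common ancestor is $v$, and the distance identity gives
\[
h_\omega(v) \;=\; \tfrac{1}{2}\,D_{(T,\omega)}(a,b) \;=\; \tfrac{1}{2}\,D_{(T,\omega')}(a,b) \;=\; h_{\omega'}(v).
\]
Since both height functions vanish on $X$, they agree on all of $V(T)$, and the recovery formula then forces $\omega = \omega'$.

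For (i)$\Rightarrow$(ii) I would argue the contrapositive. Suppose some $v \in V^o(T)$ makes $G(\cL,v)$ edgeless, so that no cord of $\cL$ has most recent common ancestor $v$. Starting from any equidistant proper $\omega$, I would define, for $\epsilon > 0$, a new weighting $\omega_\epsilon$ by adding $\epsilon$ to $\omega(\{v,c\})$ for every child $c$ of $v$, subtracting $\epsilon$ from $\omega(\{p,v\})$ when $v \neq \rho_T$ has parent $p$, and leaving every other edge weight unchanged. A direct verification shows that only the height at $v$ is altered (it becomes $h_\omega(v)+\epsilon$), so $\omega_\epsilon$ is still equidistant; and every $ab \in \cL$ has most recent common ancestor $w \neq v$, so the identity $D_{(T,\omega_\epsilon)}(a,b) = 2\,h_{\omega_\epsilon}(w) = 2\,h_\omega(w) = D_{(T,\omega)}(a,b)$ makes $(T,\omega_\epsilon)$ and $(T,\omega)$ $\cL$-isometric.

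The only genuinely technical issue, which I see as a minor obstacle rather than the heart of the argument, is keeping $\omega_\epsilon$ a valid proper weighting: the parent edge (if any) and any interior child edges of $v$ all carry strictly positive $\omega$-weight, while pendant child edges of $v$ need only remain non-negative, so any sufficiently small $\epsilon > 0$ satisfies all the required strict inequalities and yields $\omega_\epsilon \neq \omega$, completing the contrapositive.
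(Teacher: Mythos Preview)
Your proof is correct and follows essentially the same route as the paper: for (i)$\Rightarrow$(ii) both you and the paper build the identical perturbation (add $\epsilon$ to every child edge of $v$, subtract $\epsilon$ from the parent edge when $v\neq\rho_T$), and for (ii)$\Rightarrow$(i) both use a cord whose most recent common ancestor is $v$ to pin down the height $D_{(T,\omega)}(v,z)$ at each interior vertex. Your explicit height-function packaging makes the (ii)$\Rightarrow$(i) direction a bit more streamlined than the paper's minimal-edge contradiction, but the underlying ideas coincide.
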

\begin{proof}
(i) $\Rightarrow$ (ii): Suppose $\cL$ is an equidistant 
lasso for $T$ and assume for contradiction that there
exists an interior vertex $v$ of $T$ for which $G(\cL,v)$ 
does not contain an edge. 
Note first that $v=\rho_T$ cannot hold. Indeed, suppose
 $\omega_1$ is a equidistant, proper edge-weighting for $T$ 
and let $\epsilon>0$
be a sufficiently small real number. Consider the edge-weighting
 $\omega_2:E(T)\to \mathbb R_{\geq 0}$ defined by putting, 
for all $e\in E(T)$, $\omega_2(e)=\omega_1(e)-\epsilon$
if $\rho_T\in e$ and $\omega_2(e)=\omega_1(e)$ 
else. Clearly, $\omega_2$ is an equidistant, proper edge-weighting
for $T$ distinct from $\omega_1$ and  $(T,\omega_1)$ and $(T,\omega_2)$
are $\cL$-isometric. Since, by assumption,
$\cL$ is an equidistant lasso for $T$ it follows that
$\omega_1=\omega_2$; a contradiction.
Thus, $v\not=\rho_T$  and so there must exist a parent $w\in V(T)$ of 
$v$ in $T$. Let $\omega_1$
denote again an equidistant, proper edge-weighting for $T$. Consider 
the map $\omega_2:E(T)\to \mathbb R_{\geq 0}$ defined
by putting 
\begin{equation*}
\omega_2(e) =
\begin{cases}
\omega_1(e) &\mbox{ if } v\not\in e,\\
\omega_1(e)-\epsilon &\mbox{ if } e=\{v,w\},\\
\omega_1(e)+\epsilon & \mbox{ else, }
\end{cases}
\end{equation*}
where  $\epsilon>0$ is small enough.  Clearly, $\omega_2$ is an equidistant, 
proper edge-weighting for $T$ which is distinct from $\omega_1$. By 
construction, $(T,\omega_1)$ and $(T,\omega_2)$ are 
$\cL$-isometric and so $\omega_1=\omega_2$ must hold as
$\cL$ is an equidistant lasso for $T$; a contradiction. 

(ii) $\Rightarrow$ (i): Suppose that, for all $v\in V^o(T)$,
the graph $G(\cL,v)$ has at least one edge and assume for
contradiction that $\cL$ is not an equidistant lasso for
$T$. Then there exist distinct equidistant, proper 
edge-weightings $\omega_1$ and $\omega_2$
for $T$ such $(T,\omega_1)$ and $(T,\omega_2)$ are $\cL$-isometric.
 Thus, there must exist some edge $e\in E(T)$ such
that $\omega_1(e)\not=\omega_2(e)$. 


Assume without loss of generality that
$e=\{v,v_0\}$ is such that with $v$ being the parent of $v_0$
we have that $\omega_1(e')=\omega_2(e')$
holds for all edges $e'$ of $T$ that lie on a path from 
$v_0$ to a leaf of $T$ contained in $L(v_0)$. Note that $v_0$ could be a leaf
of $T$ in which case such a path has length zero.
Let $v_1,v_2,\ldots, v_l$, $l\geq 1$ denote
the other children of $v$. Then, by assumption, $G(\cL,v)$ contains
at least one edge and so there exist
$i,j\in\{0,l\}$ distinct and leaves $y_i,y_j\in L(v)$ 
that are descendants of $v_i$ and $v_j$ (or coincide with them), 
respectively, such that $y_iy_j\in \cL$. Let $z\in L(v_0)$ denote
a leaf of $T$. By Property~(E1), we obtain
\begin{eqnarray*}
\omega_1(e)+\sum_{e'\in E_T(v_0,z)}\omega_1(e')&=&D_{(T,\omega_1)}(v,z)
= \frac{1}{2}D_{(T,\omega_1)}(y_i,y_j)=\frac{1}{2}D_{(T,\omega_2)}(y_i,y_j)\\
&=&D_{(T,\omega_2)}(v,z)=\sum_{e'\in E_T(v_0,z)}\omega_2(e')+\omega_2(e).
\end{eqnarray*}
and so $\omega_1(e)=\omega_2(e)$ follows by the choice of $e$; a contradiction.
Thus, $\cL $ must be an equidistant lasso for $T$.
\end{proof}

Theorem~\ref{theo:edge-weight} immediately implies that
an equidistant lasso on $X$ 
need not be a covering of $X$.
Also, it implies that the size of a minimum 
equidistant lasso for an $X$-tree $T$ is $|V^{\circ}(T)|$. For the extreme 
cases that $T$ is the star tree on $X$ such a lasso has 
precisely one element and
if $T$ is binary such a lasso has $|X|-1$ elements as any 
such tree is known to have $|X|-1$ interior vertices 
(see e.\,g.\,\cite{SS03}).

Theorem~\ref{theo:characterization-corraling} combined with
Theorem~\ref{theo:edge-weight} immediately implies the
following link between equidistant and weak lassos.

\begin{corollary}\label{cor:weak-implies-equidistant}
Suppose $T$ is an $X$-tree 
and  $\cL\subseteq {X\choose 2}$ is a non-empty set of cords. 
Then  $\cL$ is an equidistant lasso for $T$ whenever it is a weak
lasso for $T$. 
\end{corollary}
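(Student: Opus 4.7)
The plan is to deduce the corollary directly from the two preceding structural theorems, by showing that the child-edge graph conditions for being a weak lasso force the weaker condition characterizing an equidistant lasso. So the task reduces to checking that, under hypotheses (C1) and (C2), every graph $G(\cL,v)$ with $v\in V^o(T)$ contains at least one edge, and then invoking Theorem~\ref{theo:edge-weight}.

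First I would dispose of the degenerate case. If $T$ is the star-tree on $X$ then $V^o(T)=\{\rho_T\}$ and every child edge of $\rho_T$ is a pendant edge $e_x$ for some $x\in X$. Picking any cord $ab\in\cL$ (which exists since $\cL\neq\emptyset$), the path from $a$ to $b$ in $T$ crosses both $e_a$ and $e_b$, so $\{e_a,e_b\}\in E_{T,\rho_T}$ and $G(\cL,\rho_T)$ has at least one edge. So by Theorem~\ref{theo:edge-weight}, $\cL$ is an equidistant lasso for $T$.

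Now assume $T$ is non-degenerate. By Theorem~\ref{theo:characterization-corraling}, the set $\cL$ satisfies (C1) and (C2). I would then fix an arbitrary $v\in V^o(T)$ and verify, in two cases, that $G(\cL,v)$ contains an edge. Recall that, by the $X$-tree axioms, any interior vertex has at least two children, so $|V_v|\geq 2$. If $v\in V^o(T)-V^o(T)^-$, then by (C2) the graph $G(\cL,v)$ is connected, and any connected graph on at least two vertices has at least one edge. If $v\in V^o(T)^-$, then (C1) says $G(\cL,v)$ is rich, and $E_s(v)\neq\emptyset$ by the remark preceding the definition of richness (this is exactly what it means to not be the parent of a pseudo-cherry). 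If $|E_s(v)|\geq 2$, richness says that $G(\cL,v)_s$ is a clique and hence has an edge. If $|E_s(v)|=1$, then since $|V_v|\geq 2$ we must have $E_l(v)\neq\emptyset$, and then richness provides an edge $\{e,e'\}\in E_v$ with $e\in E_l(v)$ and $e'\in E_s(v)$.

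Combining the two cases gives that $G(\cL,v)$ contains at least one edge for every $v\in V^o(T)$, so Theorem~\ref{theo:edge-weight} immediately yields that $\cL$ is an equidistant lasso for $T$. There is no real obstacle here; the only subtlety worth flagging is the need to check that $|V_v|\geq 2$ for every interior vertex (including the root) so that both ``connected'' and ``rich'' actually deliver a non-trivial edge, but this is built into the definition of an $X$-tree.
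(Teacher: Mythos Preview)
Your proof is correct and follows the same approach as the paper, which simply states that the corollary is an immediate consequence of Theorems~\ref{theo:characterization-corraling} and~\ref{theo:edge-weight}. You have in fact been more careful than the paper by explicitly treating the degenerate (star-tree) case, which is needed since Theorem~\ref{theo:characterization-corraling} assumes $T$ is non-degenerate.
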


We remark in passing 
that dropping the requirement 
that the two edge-weightings have to be 
equidistant in the definition of
an equidistant lasso  
gives rise to the definition of
an {\em edge-weight lasso} for an $X$-tree.
However it is easy to see that Theorem~\ref{theo:edge-weight}
does not hold with equidistant lasso replaced by edge-weight lasso.


\section{A characterization of a topological lasso
} \label{sec:topological}

In this section, we prove the companion result for when a set 
of cords is a topological lasso for an $X$-tree $T$ in terms 
of the child-edge graphs associated to the interior vertices of $T$. 
We start again with some more notation. 

Suppose that $T$ is an $X$-tree and that $v\in V^o(T)$ but not
the root of $T$. Then we
denote the $L(v)$-tree obtained from $T$ by deleting the parent
edge of $v$ by $T_{L(v)}$. Now suppose that $Y\subsetneq X$ is such
that there exists some $v\in V^o(T)$ such that $Y=L(v)$. Then 
we denote the root of $T_Y$ by $\rho(T_Y)$.

\begin{theorem}\label{theo:characterization-topology}
Suppose $T$ is an $X$-tree and 
$\cL\subseteq {X\choose 2}$ is a non-empty set of cords. 
Then the following are equivalent:
\begin{enumerate}
\item[(i)] $\cL$ is a topological lasso for $T$.
\item[(ii)] for every vertex $v\in V^o(T)$, the graph
$G(\cL,v)$ is a clique.
\end{enumerate}
\end{theorem}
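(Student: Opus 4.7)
My plan is to prove each implication separately, using Theorem~\ref{theo:characterization-corraling} as a stepping stone for the reverse direction.

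For (i)$\Rightarrow$(ii) I would argue by contrapositive. Suppose some $G(\cL,v)$ is not a clique, so there exist distinct child edges $e_1,e_2\in V_v$ with $\{e_1,e_2\}\notin E_v$; let $u_1,u_2$ be the corresponding children of $v$ in $T$. Fixing any equidistant, proper edge-weighting $\omega$ of $T$, the goal is to construct a non-equivalent $X$-tree $T'$ together with an equidistant, proper edge-weighting $\omega'$ such that $(T,\omega)$ and $(T',\omega')$ are $\cL$-isometric, contradicting topological lasso-hood. Following the template of the proof of Proposition~\ref{prop:corraling-impliesI}, in the generic case $|V_v|\geq 3$ I would insert a new vertex $v'$ as a child of $v$ and the common parent of $u_1$ and $u_2$, setting $\omega'(\{v,v'\})=\alpha$ for some small $\alpha>0$, $\omega'(\{v',u_i\})=\omega(\{v,u_i\})-\alpha$ for $i=1,2$, and leaving $\omega'$ otherwise equal to $\omega$. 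Equidistance and properness are preserved, every pairwise leaf distance except those between $L(u_1)$ and $L(u_2)$ is unchanged, and by hypothesis no cord in $\cL$ joins $L(u_1)$ and $L(u_2)$. The degenerate sub-cases $|V_v|=2$ and $v=\rho_T$ are handled by the same edge-collapsing/relocating template used in Proposition~\ref{prop:corraling-impliesI}, always producing a $T'$ not equivalent to $T$.

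For (ii)$\Rightarrow$(i), observe first that the clique hypothesis subsumes conditions (C1) and (C2) of Theorem~\ref{theo:characterization-corraling}, so $\cL$ is automatically a weak lasso for $T$. Hence, for any $X$-tree $T'$ and equidistant, proper $\omega,\omega'$ with $(T,\omega)$ and $(T',\omega')$ being $\cL$-isometric, $T'$ refines $T$ via some edge-contraction map $\phi:V(T')\to V(T)$. It remains to show $\phi$ is bijective on vertices. Assume it is not and pick $v\in V(T)$ whose fibre $F_v:=\phi^{-1}(v)$ has size at least two; since $v$ is necessarily interior, every vertex of $F_v$ is interior in $T'$, every edge of $F_v$ is an interior edge of $T'$, and properness of $\omega'$ forces the height function $h_{T'}(u):=D_{(T',\omega')}(u,\ell)$ (well-defined by equidistance) to decrease strictly along every path away from the root $v^*$ of the subtree $F_v$.

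The clique condition at $v$ then gives a contradiction. For every $f\in F_v$, two distinct children of $v$ in $T$ can be chosen---one from each of two different $T'$-subtrees rooted at a child of $f$---so that their $T'$-MRCA is $f$; existence of a suitable child of $v$ in each such subtree follows from the interior-degree condition applied to the deepest $F_v$-vertex encountered on the way down. Call these children $u_1,u_2$. By the clique condition at $v$ there is a cord $ab\in\cL$ with $a\in L_T(u_1)$ and $b\in L_T(u_2)$, and then $\cL$-isometry together with equidistance yields $2h_{T'}(f)=D_{(T',\omega')}(a,b)=D_{(T,\omega)}(a,b)=2h_T(v)$. Since this holds for every $f\in F_v$, $h_{T'}$ is constant on $F_v$, contradicting the strict decrease noted above. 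Thus all fibres are singletons and $T'\cong T$.

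The main obstacle will be the bookkeeping in the degenerate sub-cases of the forward direction, to guarantee that the modified tree $T'$ is a valid $X$-tree with $\omega'$ still proper and equidistant; the reverse direction is conceptually clean once one recognises the fibre-height dichotomy, provided the auxiliary observation that every vertex of $F_v$ is realised as the $T'$-MRCA of some pair of distinct children of $v$ in $T$.
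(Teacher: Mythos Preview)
Your proposal is essentially correct, but the two directions relate to the paper's proof in rather different ways.

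\medskip
\textbf{Direction (ii)$\Rightarrow$(i).} Here you take a genuinely different route. The paper argues by locating a minimal subtree $T_Y$ on which $T$ and $T'$ still agree, extracting a triplet $xy|z\in\mathcal R(T')\setminus\mathcal R(T)$, and then using Lemma~\ref{lem:dist-transfer}(iii) together with the clique condition at $\rho(T_Y)$ to force $T'|_{\{x,y,z\}}$ to be a star, a contradiction. Your fibre-height argument is cleaner: once $T'$ refines $T$, every vertex $f$ of a non-trivial fibre $F_v$ must (by the clique condition at $v$ and $\cL$-isometry) sit at height exactly $h_T(v)$, which is incompatible with properness of $\omega'$ on the interior edges of $F_v$. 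The key auxiliary claim---that every $f\in F_v$ is the $T'$-MRCA of leaves coming from two distinct children of $v$ in $T$---is correct and follows exactly as you sketch, since each child of $f$ in $T'$ has below it the root of at least one fibre $F_u$ with $u$ a child of $v$. This buys you a proof that does not rely on the triplet machinery and makes the role of equidistance (via the height function) more transparent.

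\medskip
\textbf{Direction (i)$\Rightarrow$(ii).} Your direct ``insert $v'$ beneath $v$'' construction is fine when $|V_v|\geq 3$, but your appeal to Proposition~\ref{prop:corraling-impliesI} for the degenerate case $|V_v|=2$ is not quite right: that proposition explicitly dispenses with $|V_v|=2$ by invoking Lemma~\ref{lem:corraling-implies}, and the edge-collapsing move there (for $v=\rho_T$) does not, as written, yield an equidistant $\omega'$ without adjusting the weight of the surviving pendant edge. These sub-cases can certainly be handled (e.g.\ when $v\neq\rho_T$ contract the parent edge of $v$ and add its weight to both child edges; when $v=\rho_T$ and the two children have equal height, identify them), but the bookkeeping is exactly the obstacle you flag. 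The paper sidesteps all of this by first observing that a topological lasso is a weak lasso, so Theorem~\ref{theo:characterization-corraling} already gives richness/connectedness of every $G(\cL,v)$; this forces the two non-adjacent child edges in a non-clique $G(\cL,v)$ to be pendant and forces $|V_v|\geq 3$, so only the generic construction is ever needed.
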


\begin{proof}
(i) $\Rightarrow$ (ii): Suppose that $\cL$ is a topological lasso 
for $T$ and assume for contradiction that there exists some 
vertex $v\in V^o(T)$ such that $G(\cL,v)$ is not a clique. Then there
exist child edges $e,e'\in E(T)$ of $v$ such that $\{e,e'\}\not\in E_v$.
Let $v_e$ and $v_{e'}$ denote the children of $v$ incident with $e$ and
$e'$, respectively, and let $\omega$ denote an equidistant, proper 
edge-weighting of $T$. We distinguish the cases that (i) $v$ is the
parent of a pseudo-cherry of $T$ and that (ii) $v\in V^o(T)^-$.

Case (i): Assume that $v$ is the parent of a pseudo-cherry of $T$.  Then since 
every topological lasso for $T$ is in particular a weak lasso for $T$,
Theorem~\ref{theo:characterization-corraling} implies that $G(\cL,v)$
is connected and that, in addition to $x:=v_e$ and $y:=v_{e'}$,  
there must exists a further child of $v$ (that is a leaf of $T$). 
Note that since $\omega$ is equidistant, we must have 
$\omega(e)=\omega(e')$. 

Let $T'$ denote the $X$-tree obtained from $T$ by 
subdividing the edge $e'$ by a vertex $w$ that is not already contained
in $V(T)$, adding the edge $\{x,w\}$ and deleting the edge $e$. 
Clearly, $T$ and  $T'$ are not equivalent.
Let $0<\epsilon<\omega(e)$ and consider edge-weighting 
$$
\omega': E(T')\to \mathbb R_{\geq 0}:\,\,
f\mapsto\begin{cases}
\omega(f) & \mbox{ if } w\not\in f,\\
\omega(f)-\epsilon &\mbox{ if } f\in\{\{w,x\},\{w,y\}\},\\
\epsilon &\mbox{ else. }
\end{cases}
$$
Clearly, $\omega'$ is equidistant
and proper and it is straight forward to see that 
$(T',\omega')$ and $(T,\omega)$ are $\cL$-isometric. Since $\cL$
is a topological lasso for $T$ it follows that 
$T$ and $T'$ must be equivalent, a contradiction.

Case (ii): Assume that $v\in V^o(T)^-$.
Then since 
every topological lasso for $T$ is in particular a weak lasso for $T$,
Theorem~\ref{theo:characterization-corraling} implies that $G(\cL,v)$
is rich. But then $v_e$ and $v_{e'}$ must be leaves of $T$. With
$x=v_e$ and $y=v_{e'}$ we obtain a contradiction using the same 
arguments as in Case (i).

(ii) $\Rightarrow$ (i): Suppose that, for every vertex $v\in V^o(T)$, 
the graph
$G(\cL,v)$ is a clique and assume that $T'$ is an $X$-tree 
and $\omega$ and $\omega'$ are equidistant, proper edge-weightings
for $T$ and $T'$, respectively, such that $(T,\omega)$ and
$(T,\omega')$ are  $\cL$-isometric. If $T$ is the star-tree on $X$ 
then  $\cL$ must necessarily be a topological lasso for $T$. So assume that
$T$ is non-degenerate. Then, by 
Theorem~\ref{theo:characterization-corraling}, $\cL$ is a 
weak lasso for $T$ and so $T'$ must be a refinement of $T$. 

We next
show that $T$ and $T'$ are in fact equivalent. Assume for contradiction
that $T$ and $T'$ are not equivalent. Then there must exist a non-empty subset
$Y\subsetneq X$ of leaves of $T$ such that the subtree $T_Y$ of $T$ with
leaf set $Y$ is equivalent with the subtree $T'_Y$ of $T'$ with leaf set
$Y$ but the subtrees of $T$ and $T'$ with root the parents of $\rho(T_Y)$ and 
$\rho(T'_Y)$, respectively, are not. Let $w$ denote the parent of 
$\rho(T_Y)$ in $T$ and $w'$ the parent of $\rho(T'_Y)$ in $T'$. Then
there must exist some $z\in L(w')-L(\rho(T'_Y))$ and  distinct
$x,y\in L(\rho(T'_Y))=Y$ such that $xy|z\in \mathcal R(T')$ and
$xy|z\not\in \mathcal R(T)$. 
Hence, $\rho(T_Y)$ must
lie on the path from $x$ to $y$ in $T$. Combined with the fact that
$T'$ is a refinement of $T$,
it follows that $T|_{\{x,y,z\}}$ is the star-tree on $Z:=\{x,y,z\}$ whose
unique interior vertex is $\rho(T_Y)$. Since $\omega$ is equidistant,
we obtain
\begin{eqnarray}\label{eqn:equality}
D_{(T,\omega)}(x,y)=D_{(T,\omega)}(x,z)=D_{(T,\omega)}(z,y).
\end{eqnarray}
Let $e_x,e_y,e_z\in E(T)$ denote the child edges of $\rho(T_Y)$ that are
crossed by a path from $\rho(T_Y)$ to $x$, $y$, and $z$, respectively,
and let  $v_{e_s}$ denote the child of $\rho(T_Y)$
 incident with $e_s$, for all $s\in Z$.
Since, by assumption, $G(\cL,\rho(T_Y))$ is a clique there must 
exist leaves $a\in L(v_{e_x})$, $b\in L(v_{e_y})$, and $c\in L(v_{e_z})$
such that $ab,bc,ca\in \cL$.
By the same reason, it follows in view of 
Lemma~\ref{lem:dist-transfer} that 
$ D_{(T,\omega)}(p,q)=D_{(T',\omega')}(p,q)$ must hold for any two elements
$p,q\in Z$ distinct. Combined with Equality (\ref{eqn:equality}),
we obtain $ D_{(T',\omega')}(p,q)=D_{(T',\omega')}(p,q)$,
for any two such elements $p$ and $q$. But then 
$T'|_Z$ must be the star-tree on $Z$ and so $xy|z\not\in \mathcal R(T')$,
a contradiction.
\end{proof}

Note that 
Theorem~\ref{theo:characterization-topology} immediately
implies that a topological lasso $\cL\subseteq {X\choose 2}$  
must be a covering of $X$. Also note that Theorem~\ref{theo:characterization-topology} implies that if $\cL$ is a
topological lasso for an $X$-tree $T$ then $\cL$ must
contain at least $\sum_{v\in V^o(T)}{|V_v|\choose 2}$ cords
and that $\cL= {X\choose 2}$ must hold in case $T$ is the star-tree on $X$.
Finally, note that as the example presented in 
Fig.~\ref{fig:top-counterexample} shows, 
the requirement that the two proper edge-weightings
in the definition of a topological lasso must be equidistant
cannot be dropped. More precisely for $X=\{a,b,c,d\}$ and
$\cL=\{ab,cd, ad\}$ the $X$-tree $T$ pictured on the left of that figure
(with the indicated edge-weighting $\omega$ ignored for the moment)
satisfies Properties (C1) and (C2) and is not equivalent
with the $X$-tree $T'$ depicted on
the right of that figure (again with the indicated edge-weighting $\omega'$
ignored where $0<\epsilon<1$). Also $\omega$ and $\omega'$
are clearly proper and, in the case of $\omega$, also 
equidistant and $(T,\omega)$ and $(T',\omega')$ are  $\cL$-isometric.

\begin{figure}[h]
\begin{center}
\includegraphics[scale=0.3]{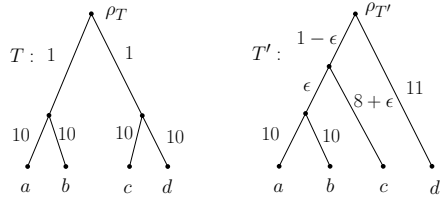}
\caption{\label{fig:top-counterexample}
An example showing that for 
Theorem~\ref{theo:characterization-topology} 
to hold the ``equidistant'' requirement in the 
definition of a topological lasso cannot be dropped (see text for details).}
\end{center}
\end{figure}

Combining Proposition~\ref{theo:edge-weight} and 
Theorem~\ref{theo:characterization-topology},
we obtain the following corollary.

\begin{corollary}\label{cor:coincide}
Suppose that $T$ is an $X$-tree.
Then every topological lasso 
for $T$ must also be an edge-weight lasso for $T$. 
Moreover, if $T$ is binary and $\cL\subseteq {X\choose 2}$ 
then $\cL$ is a topological lasso for $T$ if and only if
$\cL$ is an equidistant lasso for $T$. In particular, every
topological/equidistant lasso for $T$ is also a strong lasso for $T$
in this case.
\end{corollary}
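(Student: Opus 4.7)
The plan is to deduce every part of the corollary directly from the two characterizations Theorem~\ref{theo:edge-weight} and Theorem~\ref{theo:characterization-topology}, together with the elementary observation that $|V_v|\geq 2$ for every $v\in V^o(T)$. This size bound follows from the definition of an $X$-tree: non-root interior vertices have degree at least three and hence have at least two child edges, while the root has at least two children as well since $|X|\geq 3$.

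For the first assertion I will take $\cL$ to be a topological lasso for $T$ and invoke Theorem~\ref{theo:characterization-topology}, which tells me that $G(\cL,v)$ is a clique for every $v\in V^o(T)$. Since $|V_v|\geq 2$, every such clique carries at least one edge, so Theorem~\ref{theo:edge-weight} applies and delivers that $\cL$ is also an equidistant lasso for $T$, which is the content of the first assertion.

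For the binary case I will use that every interior vertex of a binary $X$-tree has exactly two children, so $|V_v|=2$ for every $v\in V^o(T)$. On a two-element vertex set the conditions ``$G(\cL,v)$ is a clique'' and ``$G(\cL,v)$ contains at least one edge'' coincide, so Theorem~\ref{theo:characterization-topology} combined with Theorem~\ref{theo:edge-weight} yields that $\cL$ is a topological lasso for $T$ if and only if it is an equidistant lasso for $T$. The final strong-lasso statement will then be immediate from the definition of a strong lasso, since in the binary case either of ``topological'' and ``equidistant'' already entails the other. No real obstacle is expected: the entire argument reduces to a short combinatorial comparison between ``having an edge'' and ``being a clique'' on the child-edge graphs $G(\cL,v)$, two conditions which differ only at interior vertices with three or more children, i.e.\ precisely the interior vertices that prevent $T$ from being binary.
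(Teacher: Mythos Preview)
Your argument is correct and is exactly the paper's approach: the paper's entire proof is the one-line remark that the corollary follows by combining Theorem~\ref{theo:edge-weight} with Theorem~\ref{theo:characterization-topology}, and you have simply spelled out how that combination works. One caveat worth flagging: the corollary's first sentence literally says ``edge-weight lasso'', which the paper defines separately (see the remark at the end of Section~\ref{sec:edge-weight}) as a strictly stronger notion than ``equidistant lasso''; what your argument --- and the paper's own one-line justification --- actually establishes is only that a topological lasso is an \emph{equidistant} lasso, so the phrase in the statement appears to be a slip for ``equidistant''.
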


Note however that for $\cL\subseteq {X\choose 2}$ a set
of cords and $T$ an $X$-tree it is possible that $\cL$ is
simultaneously an edge-weight and a topological lasso for 
$T$ but $T$ is not binary. We present such an example in Section \ref{sec:examples}.

\section{Examples of lassos} \label{sec:examples}

In this section we apply our findings to two types of constructions
of sets of cords of $X$. Both of them were originally introduced 
in \cite{DHS12} 
for the case of edge-weighted, unrooted, phylogenetic trees with
leaf set $X$ where an edge-weighting  for such a tree is 
defined as in the case of an $X$-tree. 

Assume for the remainder of this section that $T$ is an $X$-tree.
Then the first example relies on the notion of a circular
ordering (of the leaf set) of $T$ (see e.g. \cite{SS03}
for further details on such orderings). Following \cite{DHS12} a {\em
circular ordering (of $X$) of $T$} is a cyclic 
permutation $\sigma$ of $X$ such that the following holds.
There exists a planar embedding of $T$ such that, for every $x\in X$,
the leaf that is encountered after $x$ when traversing $T$ in
that embedding in say, a counter-clockwise fashion, is the leaf $\sigma(x)$.
For example, $(a,b,c,d)$ is a circular ordering of $T$ the
$\{a,b,c,d\}$-tree depicted in Figure~\ref{fig:top-counterexample}(a).

Let $(x_1,x_2,\ldots,x_n)$
denote a circular ordering of the leaves of $T$
where $n:=|X|$ and put $x_{n+1}=x_1$. Then since for every interior 
vertex $v$ of $T$ there exists some $i\in \{1,\ldots,n\}$ 
such that $v$ lies on the path from $x_i$ to $x_{i+1}$, 
Theorem~\ref{theo:edge-weight} implies that
the set 
$$
\cL_c=\{x_ix_{i+1}\,:\, 1\leq i\leq n\}
$$ is an
equidistant lasso for $T$. However, $\cL_c$ is clearly not an
equidistant lasso of minimal size.
In view of Theorem~\ref{theo:characterization-topology}, 
$\cL_c$ is a topological lasso
for $T$ if and only if every interior vertex $v\in V^o(T)$ has degree
three (except possibly the root $\rho_T$ which might also have
degree two) as in that case $G(\cL_c,w)$ is a complete graph
for all $w\in V^o(T)$. Thus, $\cL_c$ is also a strong lasso
for such $X$-trees. In view
of Theorem~\ref{theo:characterization-corraling},
$\cL_c$ is a weak lasso for $T$ if and only if  
every vertex $v\in V^o(T)^-$ has degree
three (except possibly the root $\rho_T$ which might also have
degree two) as in that case $G(\cL_c,w)$ is rich
for all $w\in V^o(T)^-$ and connected for all 
$w\in V^o(T)- V^o(T)^-$.

Our final construction relies on the notion of a bipartition $\{A,B\}$ of $X$ 
and
was introduced in \cite{DHS12} where it was shown that
the set 
$$
A\vee B:=\{ab\in{X\choose 2}\,:\,a\in A\mbox{ and } b\in B\}
$$
is a topological lasso\footnote{The definition of a topological lasso
for an unrooted phylogenetic tree on $X$ is the same as that of a
topological lasso for an $X$-tree but with the requirement dropped 
that the two proper edge-weightings mentioned in that definition 
are equidistant.} 
for an unrooted phylogenetic tree $T'$ with leaf set
$X$ of size 4 or more if and only if, for every 2-subset $\frak c$ of $X$ 
whose elements have that same parent in $T'$,
we have that $A\cap\frak c\not=\emptyset\not=B\cap \frak c$.
Note that this implies in particular
that for an unrooted phylogenetic tree on $X$ to be topologically
lassoed by $A\vee B$, every interior vertex of $T$ can be adjacent 
with at most two leaves. Thus every
pseudo-cherry of $T'$ (defined as in
the case of an $X$-tree) must be a cherry of $T$. 

Defining for an unrooted phylogenetic tree $T'$
on $X$ a set $\cL\subseteq {X\choose 2}$ of cords to be an edge-weight lasso
as in the case of an $X$-tree but again with the requirement
``equidistant'' on the proper edge-weightings removed, it
is not difficult to see that $A\vee B$ is not an edge-weight lasso
for $T'$. Also it is straight forward to see that 
any two proper edge-weightings $\omega$ and $\omega'$ for
$T'$ such that 
$D_{(T',\omega)}(a,b)=D_{(T',\omega')}(a,b)$ holds for all $ab\in \cL$
must coincide on the interior edges of $T'$ where for a
proper edge-weighting $\alpha$ of $T'$ we denote the induced
distance on $V(T')$ also by $D_{(T',\alpha)}$.

In the case of $X$-trees the situation changes in so far that if
$T$ is non-degenerate and 
$\{A,B\}$ is such that every pseudo-cherry of $T$ contains elements
from both $A$ and $B$ then, in view
of Theorem~\ref{theo:characterization-corraling}, $A\vee B$
must be a weak lasso for $T$ and thus, by 
Corollary~\ref{cor:weak-implies-equidistant}, also an
equidistant lasso for $T$.  
In view of Theorem~\ref{theo:characterization-topology}, $A\vee B$ is not a 
topological lasso
for $T$ unless every interior vertex $v\in V^o(T)$ of $T$
is incident with at most two leaves of $T$ and, if $v$ is incident with
two leaves, then one is contained in $A$ and the other in $B$
as otherwise $G(A\vee B, v)$ would not be a clique. Since for such $X$-trees
$T$ we have, for all $v\in V^o(T)$, that $G(\cL,v)$ contains at least one
edge it follows that $A\vee B$ is a strong lasso for $T$.

If $T$ is the star-tree on $X$ then $A\vee B$ is a weak lasso for $T$. 
Also, $A\vee B$ is an equidistant lasso for $T$ as $G(\cL,\rho_T)$ 
contains at least one edge but it is not a topological lasso for $T$
as 
$A\vee B\not={X\choose 2}$.

\section{Conclusion} \label{sec:conclusion}
In the form of investigating when a set of cords of a finite set
$X$ of size at least three is an 
equidistant/topological/weak/strong lasso for an $X$-tree, we have 
addressed the topical problem of when a set of partial 
distances for a set of individuals within a variety uniquely 
determines a dendogram on those individuals. Such structures 
are commonly constructed as part of a phenetic clustering 
step within a genomewide association study to better 
understand the link between phenotypic and a genotypic variation
within a variety. 
For $T$ an $X$-tree and $\cL\subseteq {X\choose 2}$ a 
set of cords, we have presented characterizations for when $\cL$
is an equidistant/weak/tropological
lasso for $T$ in terms of the structure  of the
child-edge graphs associated to the 
interior vertices of $T$. 
As immediate consequences, our characterizations allow us
to not only shed light into the problem of when two of 
the above types of lassos coincide but also into the size of minimum size
equidistant/topological/weak lassos. 

Despite these encouraging results a number of open 
questions remain. For example, the characterizations
above require knowledge of the structure of $T$ in the
form of the child-edge graphs associated to the interior
vertices of $T$. Thus, is it possible to characterize
or at least understand lassos without this structural insight
into $T$. A potential
candidate for this might be the graph $\Gamma(\cL)$
associated to $\cL$ whose vertex set is $X$ and whose
edge set is $\cL$. The underlying rational
for this is that for $|X|\geq 4$,  it was shown in
\cite[Theorem 1]{DHS12} that for $\cL$ to be a topological lasso for an
unrooted phylogenetic tree $T$ 
on $X$ the graph $\Gamma(\cL)$ has to be connected
and for $\cL$ to be an edge-weight lasso for $T$ it
has to be strongly non-bipartite 
(where a graph $G$  is said to be {\em strongly non-bipartite}
if every connected component of $G$ is not bipartite).
Also for $\cL$ as constructed in the first example
in Section~\ref{sec:examples} the graph $\Gamma(\cL)$ is connected.

To overcome the potential loss of information 
in distance based phylogenetic tree reconstruction,
\cite{PS04} proposed using {\em $k$-dissimilarities}, 
$k\geq 3$, on $X$ rather
than 2-similarities as is the case when reconstructing
edge-weighted phylogenetic trees 
from distances (see also \cite[p.176]{fel-03}
and \cite{HHMS12,war-10,dez-ros-00} and the references 
therein for recent work on such objects which are sometimes
also called $k$-way similarities, $k$-way distances,
and $k$-semimetrics).  
It would be interesting to know what can be said about
lassoing and corraling of $X$-trees within this more general framework.\\

\noindent{\bf Acknowledgement}
A.\,-A.\,Popescu thanks the Norwich Research Park (NRP) for support. The authors thank the referees for their helpful comments.




\bibliographystyle{abbrvName} 
\bibliography{ultrametric}

\end{document}